\title{A Perfect Set of Reals with Finite Self-Information}
\author{Ian Herbert}
\begin{document}

\newtheorem{thm}{Theorem}[section]
\newtheorem{lemma}[thm]{Lemma}
\newtheorem*{thm*}{Theorem}
\newtheorem*{thmref}{Theorem \ref{perfectsetallf}}
\newtheorem{prop}[thm]{Proposition}
\newtheorem{claim}[thm]{Claim}
\newtheorem{cor}[thm]{Corollary}
\newtheorem{ex}[thm]{Example}
\newtheorem{?}[thm]{Question}
\newtheorem{alg}[thm]{Algorithm}

\theoremstyle{definition}
 \newtheorem{definition}[thm]{Definition}
  \newtheorem{example}[thm]{Example}
 \newtheorem{remark}[thm]{Remark}

\newcommand{\R}{\mathbb{R}}
\newcommand{\Q}{\mathbb{Q}}

\newcommand{\Z}{\mathbb{Z}}
\newcommand{\proj}{\mathbb{P}}
\newcommand{\C}{\mathbb{C}}
\newcommand{\V}{\mathcal{V}}
\newcommand{\A}{\mathcal{A}}
\newcommand{\f}{\overline{f}}
\newcommand{\I}{\overline{I}}
\newcommand{\1}{\mathds{1}}

\newcommand{\s}{\sigma}
\newcommand{\ua}{\upsilon^{\alpha}}
\newcommand{\ma}{\mu^{\alpha}}
\newcommand{\mb}{\mu^{\beta}}
\newcommand{\uas}{\upsilon^{\alpha_{\sigma}}}
\newcommand{\nin}{\notin}
\newcommand{\phii}{\Phi_i}
\newcommand{\twow}{2^{<\omega}}
\newcommand{\empstr}{\langle \ \rangle}
\newcommand{\emp}{\emptyset}
\newcommand{\fbar}{\textit{\underbar{f}}}
\newcommand{\fhat}{\hat{f}}
\newcommand{\abar}{\underbar{A}}
\newcommand{\rst}{\mid}
\newcommand{\conc}{}
\newcommand{\<}{\langle}
\newcommand{\lam}{\Lambda}
\newcommand{\del}{\Delta}
\newcommand{\as}{\alpha_{\sigma}}
\newcommand{\asj}{\alpha_{\sigma_j}}
\newcommand{\aspre}{\alpha_\sigma^{*}\conc\sigma(i)}
\newcommand{\mpr}{M_p^r}
\newcommand{\dom}{\text{dom}}
\newcommand{\scripto}{\mathcal{O}}
\newcommand{\restrict}{\upharpoonright}
\newcommand{\U}{\mathbb{U}}
\newcommand{\Dim}{\text{Dim}}
\newcommand{\phe}{\phi_e}
\newcommand{\phes}{\phi_{e,s}}
\newcommand{\phat}{\hat{\phi}}
\newcommand{\nais}{n^{\alpha}_{i,s}}
\newcommand{\phates}{\hat{\phi}_{e,s}}

\maketitle
\begin{abstract} We examine a definition of the mutual information of two reals proposed by Levin in \cite{levin}. The mutual information is 
 $$I(A:B)=\log\sum\limits_{\sigma,\tau\in\twow} 2^{K(\s)-K^A(\s)+K(\tau)-K^B(\tau)-K(\s,\tau)},$$
 where $K(\cdot)$ is the prefix-free Kolmogorov complexity. A real $A$ is said to have finite self-information if $I(A:A)$ is finite. We give a construction for a perfect $\Pi^0_1$ class of reals with this property, which settles some open questions posed by Hirschfeldt and Weber. The construction produces a perfect set of reals with $K(\s)\leq^+ K^{A}(\s)+f(\s)$ for any given $\Delta^0_2$ $f$ with a particularly nice approximation and for a specific choice of $f$ it can also be used to produce a perfect $\Pi^0_1$ set of reals that are low for effective Hausdorff dimension and effective packing dimension. The construction can be further adapted to produce a single perfect set of reals that satisfy $K(\s) \leq^+ K^A(\s)+f(\s)$ for all $f$ in a `nice' class of $\Delta^0_2$ functions which includes all recursive orders. 
\end{abstract}

\section{Mutual Information}

There has been some interest in recent years in developing in a recursion-theoretic framework a formal treatment of the concept of the information content of a reals (infinite strings of 0's and 1's) and in particular what it means for two reals to share information. Turing reductions capture one concept of information content: information about other reals. In this understanding of information a real $A$ `knows about' a real $B$ just in case $A\geq_T B$ and the mutual information of two reals then is just the Turing ideal generated by the pair. These ideals have been well-studied in their own right. However, one could instead be interested in a different kind of information, namely, information about finite sets (finite strings of 0's and 1's). Every finite set is recursive, so here Turing reduction is not a useful concept. Since even the empty set `knows' about all these finite sets, the interesting quantity here is how much more a real $A$ knows about a string $\s$ than the empty set does. The prefix-free complexity of a string $\s$, denoted $K(\s)$ is the length of the shortest description of $\s$ for some universal prefix-free decoding machine $\mathbb{U}$ (see either \cite{downhirsch} or \cite{nies} for a more in-depth discussion of Kolmogorov complexity) and is often used as a measure of the informational content of $\s$. This notion relativizes easily by letting the universal decoding machine have access to an oracle for its computations, and then the difference $K(\s)-K^{A}(\s)$ is a measure of how much better $A$ is at describing $\s$ than the empty set. Using this as as formalization of our intuitive notion of information we can define the mutual information of two reals as follows. 

\begin{definition}\label{mutinfo} The \textit{mutual information} of two reals $A$ and $B$ is\\
\\
$$I(A:B)=\log{\sum\limits_{\sigma , \tau \in 2^{<\omega}}2^{K(\s)-K^A(\s)+K(\tau)-K^B(\tau)-K(\sigma , \tau)}}.$$\end{definition}
As above, here $K(\s)-K^A(\s)$ is some rough idea of how much $A$ `knows' about $\s$, and  $K(\tau)-K^B(\tau)$ is how much $B$ `knows' about $\tau$. We weight the summand by $2^{-K(\s,\tau)}$, where $K(\s,\tau)$ is the complexity of the pair $(\s, \tau)$, so that pairs of strings that are more closely related contribute more to the sum (it is easier to describe the pair $(\s,\tau)$ if $\s$ and $\tau$ are closely related, so the complexity will be lower). This ensures that reals $A$ and $B$ will have high mutual information if and only if the strings that they each know a lot about are related enough. The distinction we will be interested in for different pairs of reals is whether $I(A:B)$ is finite or infinite.
Definition~\ref{mutinfo} is equivalent to a definition proposed by Levin in \cite{levin}, and is the one used by Hirschfeldt and Weber in \cite{hirschfeldtweber}. In the same paper, Levin proposed another defintion of mutual information that has since been called \emph{simplified mutual information}, where the sum in Definition~\ref{mutinfo} is only over pairs with $\s=\tau$. Clearly, if the mutual information of two reals is finite then so is the simplified mutual information, but it is open whether the converse holds, i.e., whether the notions coincide.  

 The main result of the Hirschfeldt and Weber paper involved the notion of finite self-information.
 \begin{definition} A real $A$ \textit{has finite self-information} if $I(A:A)<\infty$.\end{definition}

Levin had posed the question as to whether this notion coincided with that of $K$-triviality (a real $A$ is \emph{K-trivial} if there is a $b$ such that, for all $n$, $K(A\restrict_n)=K(n)+b$  or, equivalently, if there is a $c$ such that for all $\s$ $K^{A}(\s) \geq K(\s)-c$. This is called being \emph{low for K}. The equivalence is due to Nies \cite{lowforkktrivial} ).  Clearly any $K$-trivial real has finite self-information, since the difference $K(\s)-K^{A}(\s)$ is at most $c$ for all $\s$, and so $I(A:A)$ is bounded by $2c + \log{\sum\limits_{\sigma , \tau \in 2^{<\omega}}2^{-K(\sigma , \tau)}}$, which is finite. Hirschfeldt and Weber showed that the converse fails.

\begin{thm}\emph{(Hirschfeldt, Weber \cite{hirschfeldtweber} )}  There is a real that has finite self-information that is not $K$-trivial.\end{thm}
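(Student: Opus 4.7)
The plan is to construct a single $\Delta^0_2$ real $A$ by a Kraft--Chaitin / cost-function argument so that the compression gap $K(\sigma)-K^A(\sigma)$ is controlled not by a constant (which would give $K$-triviality) but by a suitably slow-growing recursive function $f(|\sigma|)$. An unbounded but slow $f$ should leave room for $A$ to fail $K$-triviality while still forcing the self-information sum to converge.

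First, I would pick a nondecreasing, unbounded, recursive $f:\omega\to\omega$ growing slowly enough that
$$\sum_{\sigma,\tau\in 2^{<\omega}} 2^{f(|\sigma|)+f(|\tau|)-K(\sigma,\tau)} < \infty.$$
Such $f$ exist by routine Kraft-style estimates together with the counting bound on $\{\sigma : K(\sigma)\le n\}$: one groups the sum by $(|\sigma|,|\tau|)$, uses that $\sum_{\sigma,\tau} 2^{-K(\sigma,\tau)}\le 1$, and lets $f$ grow slowly enough that the extra factor $2^{f(|\sigma|)+f(|\tau|)}$ does not overwhelm the tail of the Kraft sum. If the construction then achieves $K(\sigma)\le K^A(\sigma)+f(|\sigma|)+O(1)$, it follows immediately that
$$I(A:A) \le O(1) + \log\sum_{\sigma,\tau} 2^{f(|\sigma|)+f(|\tau|)-K(\sigma,\tau)} < \infty.$$

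Second, I would build $A$ as the limit of a computable approximation $A_s$, simultaneously enumerating a c.e.\ prefix-free machine $M$ via Kraft--Chaitin. At each stage $s$, whenever a computation $U^{A_s}(\rho)=\sigma$ appears with $|\rho|\le K_s(\sigma)-f(|\sigma|)$, enumerate into $M$ an axiom granting $\sigma$ an $M$-description of length $|\rho|+f(|\sigma|)+d$ for a fixed coding constant $d$. Changes of $A_s$ below the oracle-use of an already-enumerated axiom may force re-enumeration; I would control this by imposing on the approximation a cost function $c_f(x,s)$ equal to the total Kraft weight of axioms with use exceeding $x$. Because $f$ is unbounded, $c_f$ is strictly weaker than the standard cost function characterising $K$-triviality, leaving slack beyond what is available to a $K$-trivial approximation. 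I would then use this slack to code a non-computable $\Delta^0_2$ set $B$ into $A$ along a sparse recursive sequence of positions, chosen so that the total cost of the coding changes is finite under $c_f$. A standard counting argument on the initial segments of $A$ then shows that $A$ fails $K$-triviality.

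The principal obstacle is the balancing act in the second step: $f$ must grow quickly enough that $c_f$ genuinely relaxes the $K$-triviality cost function (so that a non-trivial $B$ can be coded) yet slowly enough that the convergence condition from the first step holds. Verifying that a single choice of $f$ satisfies both constraints, and that the total Kraft weight enumerated into $M$ stays at most $1$ throughout the approximation in spite of the re-enumerations caused by changes of $A_s$, is the technical core of the argument.
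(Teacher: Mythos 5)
Your step one cannot be carried out: there is no \emph{recursive} unbounded $f$ (whether as a function of $|\sigma|$ or of $\sigma$ itself) for which the sum $\sum_{\sigma,\tau}2^{f(|\sigma|)+f(|\tau|)-K(\sigma,\tau)}$ converges. Restricting to the diagonal $\tau=\sigma$ and to strings $\sigma=0^n$, convergence would force $\sum_{n}2^{2f(n)-K(n)}\le 2^b$ for some $b$. If $f$ is computable, then $\nu(n):=2^{2f(n)-K(n)-b}$ is a lower-semicomputable discrete semimeasure (a computable factor times the lower-semicomputable $2^{-K(n)}$), so by the coding theorem $\nu(n)\le^{\times}2^{-K(n)}$, which gives $2^{2f(n)}\le^{\times}2^{b}$ and hence $f$ bounded. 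Thus the "routine Kraft-style estimate" you appeal to does not exist; the obstruction is not a matter of choosing $f$ to grow slowly enough, it is a genuine impossibility for recursive $f$.

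This is precisely why the Hirschfeldt--Weber function in Lemma~\ref{fexists} is only $\Delta^0_2$, approximated from above by a recursive $f_s$ with $(\forall i)(\forall^\infty\sigma)(\forall s)\,f_s(\sigma)>i$, and why constructing it is itself nontrivial (the paper notes the construction is "rather involved" and does not reproduce it). The $\Delta^0_2$-ness is what blocks the coding-theorem argument above, since then $2^{f(n)}\cdot 2^{-K(n)}$ need not be lower-semicomputable. Your second step (a cost-function construction of a non-$K$-trivial $A$ satisfying $K(\sigma)\le^{+}K^A(\sigma)+f(\sigma)$, using the slack that an unbounded $f$ provides over the $K$-triviality cost function) is in the right spirit and close to what Hirschfeldt and Weber actually do, but it is predicated on having the right $f$ in hand, and the $\Delta^0_2$ approximation complicates the cost bookkeeping (you must charge for axioms enumerated against overestimates $f_s(\sigma)>f(\sigma)$ as well as for oracle changes). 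So the proposal as written does not go through; repairing it requires importing the full strength of Lemma~\ref{fexists} rather than replacing it with a recursive $f$.
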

There remained the question of whether the set of reals with finite self-information was `like' the set of $K$-trivials. In particular, Hirschfeldt and Weber asked whether there were only countable many reals with finite self-information and whether any such real had to be $\Delta^0_2$, two properties that hold of the set of $K$-trivials. This paper answers both of these questions in the negative, by constructing a perfect set of reals that have finite self-information.

\begin{thm}\label{lowforinfoperfect} There is a perfect set of reals that have finite self-information.\end{thm}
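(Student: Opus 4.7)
My approach is to reduce Theorem~\ref{lowforinfoperfect} to a stronger statement: there exist a $\Delta^0_2$ function $f$ with $\sum_{\sigma,\tau} 2^{f(\sigma)+f(\tau)-K(\sigma,\tau)} < \infty$ and a perfect $\Pi^0_1$ class $\mathcal{P}$ such that every $A \in \mathcal{P}$ satisfies $K(\sigma) \leq^+ K^A(\sigma) + f(\sigma)$. Such a bound, plugged into Definition~\ref{mutinfo}, immediately yields
\[
I(A:A) \;\leq\; 2c + \log \sum_{\sigma,\tau} 2^{f(\sigma)+f(\tau)-K(\sigma,\tau)} \;<\; \infty.
\]
For $f$ I would try $f(\sigma) = g(|\sigma|)$ for a slow-growing, nondecreasing, computable $g \to \infty$. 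Using $\sum_{\sigma} 2^{-K(\sigma)} \leq 1$ and the inequality $K(\sigma,\tau) \geq^+ K(\sigma) + K(\tau \mid \sigma^*)$ from symmetry of information, the convergence of the double sum reduces to controlling $\sum_n 2^{g(n)}\,\sum_{|\sigma|=n} 2^{-K(\sigma)}$, which will be finite provided $g$ grows sufficiently slowly relative to the tail of the Kraft sum.

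\textbf{The construction.} The core task is to build a computable perfect tree $T \subseteq 2^{<\omega}$ together with a c.e.\ set of Kraft--Chaitin axioms: whenever we observe $\alpha \in T$ with $U^\alpha(\rho) = \sigma$ (using only bits of $\alpha$), we enumerate the axiom $(|\rho| + f(\sigma) + c, \sigma)$. If the total weight of these axioms is at most $1$, the machine existence theorem produces a prefix-free machine $\hat U$ witnessing $K(\sigma) \leq^+ K^A(\sigma) + f(\sigma)$ for every $A \in [T]$. I would run a priority construction juggling two kinds of requirements: perfectness requirements, demanding a split in $T$ somewhere below each live node, and weight requirements, preventing the axiom enumeration from spending more than the allotted Kraft budget. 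The key point is that the slack provided by $f \to \infty$ lets us afford the duplication of descriptions caused by branching—slack that is unavailable in the classical $K$-trivial construction, which is exactly why that construction produces only countably many reals.

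\textbf{Main obstacle.} The hard step will be reconciling perfectness with the Kraft bound. Each branching level of $T$ roughly doubles the set of oracles along which new computations of $U^\alpha(\rho) = \sigma$ may arise, each of which triggers a fresh axiom costing $2^{-|\rho| - f(\sigma) - c}$ in weight. One must therefore schedule branchings sparingly and choose $f$ to grow at just the right rate: fast enough to absorb the geometric blowup from successive splittings, slow enough to keep $\sum_{\sigma,\tau} 2^{f(\sigma)+f(\tau)-K(\sigma,\tau)}$ finite. The ``particularly nice approximation'' to $f$ mentioned in the abstract is presumably exactly what makes this bookkeeping feasible. Once the priority construction stabilizes, $[T]$ is a perfect $\Pi^0_1$ class, every path $A$ inherits $K(\sigma) \leq^+ K^A(\sigma) + f(\sigma)$ from the universality of $\hat U$, and the calculation of the first paragraph gives $I(A:A) < \infty$ for every $A \in [T]$.
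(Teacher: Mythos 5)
Your high-level plan matches the paper's: reduce the theorem to finding an $f$ for which $\sum_{\sigma,\tau}2^{f(\sigma)+f(\tau)-K(\sigma,\tau)}$ converges and then build a perfect $\Pi^0_1$ class whose members all satisfy $K(\sigma)\leq^+ K^A(\sigma)+f(\sigma)$ (the paper's Lemma~\ref{fexists}, Corollary~\ref{hirschwebcor}, and Theorem~\ref{perfectsetf}). But the specific $f$ you propose cannot exist. If $g$ is computable, nondecreasing, and $g\to\infty$, then setting $n_k=\min\{n:g(n)\geq k\}$ makes $k\mapsto n_k$ computable, so $K(n_k)\leq^+ 2\log k$, while $g(n_k)\geq k$. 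Since $\sum_{|\sigma|=n}2^{-K(\sigma)}\geq 2^{-K(0^n)}\geq^\times 2^{-K(n)}$, your ``controlling sum'' $\sum_n 2^{g(n)}\sum_{|\sigma|=n}2^{-K(\sigma)}$ dominates $\sum_k 2^{k-2\log k}=\infty$. So no length-dependent computable $g\to\infty$ can work; the Hirschfeldt--Weber $f$ genuinely must depend on $\sigma$ itself (roughly tracking $K(\sigma)$, so that the contribution of strings like $0^n$ is damped), and its existence is the ``rather involved'' construction the paper outsources to~\cite{hirschfeldtweber}. A secondary issue: your reduction quietly uses $K(\sigma,\tau)\geq^+ K(\sigma)+K(\tau)$ in order to factor the double sum, which is false (take $\sigma=\tau$); symmetry of information only gives $K(\sigma,\tau)=^+K(\sigma)+K(\tau\mid\sigma^*)$, and the conditional term does not let you pull the $\tau$-sum out uniformly in $\sigma$.

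The tree construction sketch is also missing the paper's central mechanism. You observe correctly that each branching doubles the oracles over which new $U^\alpha(\rho)=\sigma$ computations can arise, and you propose to ``schedule branchings sparingly.'' But sparing scheduling alone fails, because mass can converge above a branching level after the split has been placed, and a slowly-approximated $f_s(\sigma)$ can drop after the split as well, so that a cheap description appears too high in the tree with too small an $f$-value. The paper handles this retroactively via the Injury Subroutine: when such a description appears above the branching level $n_i$ assigned to $f$-value $c_i$, it kills all but one extension of each surviving node at $n_i$, keeping the extension on which the most new mass has converged; this converts wasted mass into progress toward a bound, and the resulting doubly-indexed accounting ($\Delta'$ on the surviving subtree, $\Delta''$ on the killed nodes) is what actually proves the Kraft bound. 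Without this injury/charging scheme, or some replacement for it, the claim that the Kraft weight stays bounded is unsupported.
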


The proof of Theorem~\ref{lowforinfoperfect} builds on the techniques of Hirschfeldt and Weber, in particular the following lemma and corollary.
\begin{lemma} \label{fexists}\emph{(Hirschfeldt, Weber \cite{hirschfeldtweber} )} There is a function $f$ such that \linebreak $\sum\limits_{\s , \tau}2^{-K(\s, \tau)+f(\s)+f(\tau)}$ converges and $f$ has a computable approximation such that $(\forall i)(\forall^{\infty}\s)(\forall s)[f_s(\s)>i].$
\end{lemma}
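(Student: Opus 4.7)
My plan is to reduce the two-variable sum to a one-variable sum via AM-GM and universality, and then construct $f$ from the tail sums of $2^{-K(\sigma)}$.

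For the reduction, the AM-GM inequality $2^{f(\sigma)+f(\tau)}\le \tfrac12(2^{2f(\sigma)}+2^{2f(\tau)})$ together with the coding-theorem bound $\sum_\tau 2^{-K(\sigma,\tau)}\le 2^{-K(\sigma)+O(1)}$ (valid because $\sigma\mapsto\sum_\tau 2^{-K(\sigma,\tau)}$ is a c.e.\ discrete semi-measure on $2^{<\omega}$) yields
\[
\sum_{\sigma,\tau}2^{-K(\sigma,\tau)+f(\sigma)+f(\tau)}\ \le\ O\!\Big(\sum_\sigma 2^{2f(\sigma)-K(\sigma)}\Big).
\]
It therefore suffices to produce $f$ with the approximation property satisfying $\sum_\sigma 2^{2f(\sigma)-K(\sigma)}<\infty$.

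For the construction, let $T_m=\sum_{K(\sigma)>m}2^{-K(\sigma)}$; then $T_m\downarrow 0$, $T_m>0$ for every $m$, and $\sum_{K(\sigma)=m}2^{-K(\sigma)}=T_{m-1}-T_m$. Define $f(\sigma)=\lfloor\tfrac14\log(1/T_{K(\sigma)-1})\rfloor$, so that $2^{2f(\sigma)}\le T_{K(\sigma)-1}^{-1/2}$. Grouping by $K(\sigma)=m$ and applying the elementary bound $a-b\le 2\sqrt{a}(\sqrt{a}-\sqrt{b})$ for $a\ge b\ge 0$ (equivalent to $2\sqrt{ab}\le a+b$) gives
\[
\sum_\sigma 2^{2f(\sigma)-K(\sigma)}\ \le\ \sum_m\frac{T_{m-1}-T_m}{\sqrt{T_{m-1}}}\ \le\ 2\sum_m\bigl(\sqrt{T_{m-1}}-\sqrt{T_m}\bigr)\ \le\ 2\sqrt{T_0}\ <\ \infty.
\]
Since $\{\sigma:K(\sigma)\le n\}$ is finite for each $n$ and $T_m\to 0$, $f(\sigma)\to\infty$ on every cofinite set.

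For the approximation, use the standard non-increasing $K_s\searrow K$ to build a stage-$s$ approximation to $T_m$ and hence to $f$. To secure the stage-uniform condition, arrange the approximation to be non-increasing in $s$: start each $f_0(\sigma)$ at a large value and only decrease $f_s(\sigma)$ when forced in order to maintain a running bound on the partial sum. Then $f_s(\sigma)\ge f(\sigma)$ at every stage, so the cofinite property of $f$ transfers uniformly to all $s$. The main obstacle is precisely this last step: naive stage-$s$ approximations to $T_m$ are not monotone, because a string $\sigma$ may temporarily appear in $\{K_s>m\}$ with a weight $2^{-K_s(\sigma)}$ that underestimates its eventual contribution, and a $\sigma$ may even later drop out of the indicator set once $K_s(\sigma)$ passes below $m$. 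Handling this requires a careful monotonization, committing changes to $f_s$ only when the running partial sum threatens a prescribed bound, and a verification that the resulting $f_s$ still decreases only finitely often on each $\sigma$ and converges to the intended $f$.
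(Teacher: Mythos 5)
The paper does not actually prove this lemma: it is imported verbatim from Hirschfeldt and Weber, and the text remarks only that their construction ``builds such an $f$ explicitly'' and is ``rather involved,'' so there is no in-paper argument to compare against. Evaluated on its own terms, your reduction and the static construction of $f$ are correct and quite clean. The AM--GM step combined with the coding-theorem bound $\sum_\tau 2^{-K(\sigma,\tau)}\le 2^{-K(\sigma)+O(1)}$ validly collapses the two-variable sum to $\sum_\sigma 2^{2f(\sigma)-K(\sigma)}$, and defining $f$ from the tails $T_m=\sum_{K(\sigma)>m}2^{-K(\sigma)}$ with the telescoping bound $\sum_m (T_{m-1}-T_m)/\sqrt{T_{m-1}}\le 2\sqrt{T_0}$ does give convergence, with $f(\sigma)\to\infty$ automatic from $T_m\downarrow 0$.

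The genuine gap is the one you flag yourself: the computable approximation satisfying $(\forall i)(\forall^{\infty}\sigma)(\forall s)[f_s(\sigma)>i]$ is the real content of the lemma, and you do not produce it. Your target---make $f_s$ non-increasing in $s$ so that $f_s\ge f$ and the cofiniteness of $\{\sigma:f(\sigma)\le i\}$ transfers to every stage---is the right one, but no such approximation is exhibited. The obstacle you name is real: $T_m=\Omega-\sum_{K(\sigma)\le m}2^{-K(\sigma)}$ is the difference of two left-c.e.\ reals, so there is no evident monotone approximation to $T_m$ from below, and in addition the index $K(\sigma)-1$ at which you evaluate $T$ is itself known only in the limit. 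Worse, the obvious escape fails: if one fixes a computable budget $b(\sigma)$ with $\sum_\sigma b(\sigma)<\infty$ and sets $f_s(\sigma)=\lfloor(K_s(\sigma)+\log b(\sigma))/2\rfloor$, then $f_s$ is non-increasing and $\sum_\sigma 2^{2f_s(\sigma)-K_s(\sigma)}\le\sum_\sigma b(\sigma)$ is bounded at every stage, but the coding theorem gives $b(\sigma)\le 2^{-K(\sigma)+O(1)}$ and hence $K(\sigma)+\log b(\sigma)=O(1)$, so the resulting $f$ is bounded rather than tending to infinity. This shows the difficulty is not bookkeeping: the ``budget'' must itself be tied to the non-computable approximation of $K$, and the ``careful monotonization \dots committing changes to $f_s$ only when the running partial sum threatens a prescribed bound'' that you defer to is precisely the involved construction the lemma calls for. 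Without it, the proof is incomplete.
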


\begin{cor}\label{hirschwebcor} For $f$ as in the lemma, if $K(\s)\leq^{+} K^{A}(\s)+f(\s)$ for all $\s \in 2^{<\omega}$, then A has finite self-information.\end{cor}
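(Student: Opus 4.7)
The plan is to observe that the hypothesis $K(\s) \leq^+ K^A(\s) + f(\s)$ directly controls the summands appearing in the definition of $I(A:A)$, and to then invoke Lemma~\ref{fexists} to conclude that the resulting sum converges. Since $I(A:A)$ is defined as the logarithm of a sum of powers of $2$, showing the sum is finite yields the corollary.

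First, I would unpack the hypothesis. The assertion $K(\s) \leq^+ K^A(\s) + f(\s)$ means there is a constant $c$ (depending on $A$ but not on $\s$) such that for every $\s \in 2^{<\omega}$,
$$K(\s) - K^A(\s) \leq f(\s) + c.$$
Applying this inequality to both coordinates in the defining sum of $I(A:A)$, the exponent in each summand satisfies
$$K(\s) - K^A(\s) + K(\tau) - K^A(\tau) - K(\s,\tau) \leq f(\s) + f(\tau) - K(\s,\tau) + 2c.$$

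Next I would pull the uniform constant outside the sum and compare with the convergent series supplied by the lemma. This gives
$$\sum_{\s,\tau \in 2^{<\omega}} 2^{K(\s) - K^A(\s) + K(\tau) - K^A(\tau) - K(\s,\tau)} \;\leq\; 2^{2c} \sum_{\s,\tau \in 2^{<\omega}} 2^{-K(\s,\tau) + f(\s) + f(\tau)},$$
and the right-hand side is finite by Lemma~\ref{fexists}. Taking logarithms then bounds $I(A:A)$ by $2c + \log \sum_{\s,\tau} 2^{-K(\s,\tau) + f(\s) + f(\tau)} < \infty$, as required.

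There is essentially no obstacle here: the only content of the lemma~\ref{fexists} that is used is the convergence of the weighted sum, and the part of its conclusion about the computable approximation plays no role at this step (it will matter later, in the construction that produces reals $A$ actually satisfying the hypothesis). So the proof is a short direct calculation, and the real work is deferred to constructing a perfect class of reals for which the hypothesis $K(\s) \leq^+ K^A(\s) + f(\s)$ holds uniformly.
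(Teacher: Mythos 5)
Your proof is correct and is essentially the proof the paper has in mind: the paper remarks only that ``the corollary follows easily by using the given bound and the definition of finite self-information,'' and your short calculation—pulling out the constant $2^{2c}$ and invoking the convergence part of Lemma~\ref{fexists}—is exactly that calculation spelled out.
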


Hirschfeldt and Weber prove Lemma~\ref{fexists} by building such an $f$ explicitly. The construction is interesting but rather involved, and we will not need any properties of their $f$ beyond those mentioned in the Lemma. The corollary follows easily by using the given bound and the definition of finite self-information. This gives a sufficient condition for a real $A$ to have finite self-information, but it is still open whether some condition like $K(\s)\leq^{+} K^{A}(\s)+f(\s)$ for all $\s \in 2^{<\omega}$ is necessary. 

To prove Theorem~\ref{lowforinfoperfect}, we prove a more general theorem, of which it will be a corollary.

\begin{thm}\label{perfectsetf} If $f:\twow \rightarrow \mathbb{N}$ is total and has a recursive approximation $(f_s)$ with $\forall i\forall^{\infty}\s\forall s f_s(\s)>i$, then there exists a perfect $\Pi^0_1$ set $\mathcal{P}$ and a constant $c$ such that for any $A\in \mathcal{P}$ and any $\s \in \twow$, $K(\s)\leq K^A(\s)+f(\s) +c$. Moreover, for any real $C$, there exist $A,\ B \in \mathcal{P}$ such that $C \leq_{T} A\oplus B$.
\end{thm}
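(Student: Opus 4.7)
The plan is to build $\mathcal{P}$ as the set of paths through a computably-approximated labeled tree: stage by stage I will define $\mu^\alpha_s \in \twow$ for each $\alpha \in \twow$, maintaining that $\mu^{\alpha 0}_s$ and $\mu^{\alpha 1}_s$ are incomparable proper extensions of $\mu^\alpha_s$, with $\mu^\alpha := \lim_s \mu^\alpha_s$ existing for every $\alpha$. Then $\mathcal{P}$ is the perfect $\Pi^0_1$ class of reals $A = \bigcup_n \mu^{\alpha_n}$ along infinite paths $\alpha_0 \prec \alpha_1 \prec \cdots$ through $\twow$. Simultaneously, using the Kraft--Chaitin machine-existence theorem, I will build a prefix-free machine $M$ by enumerating, at stage $s$, a KC-request of length roughly $K^{\mu^\alpha_s}_s(\s) + f_s(\s)$ for $\s$ whenever this is an improvement over previously enumerated requests for $\s$. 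Since $K^A(\s) = \lim_n K^{\mu^{\alpha_n}}(\s)$ for $A \in \mathcal{P}$, for each $\s$ some enumerated request will have length at most $K^A(\s) + f(\s) + O(1)$, giving $K(\s) \le K^A(\s) + f(\s) + c$ with a single $c$ via the machine $M$.

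The main obstacle is controlling the total weight of enumerated KC-requests so the machine exists. For a single oracle $X$ one has $\sum_\s 2^{-K^X(\s) - f(\s)} \le 1$, but naively summing over all nodes $\alpha$ diverges. I would address this by a budget scheme: allocate to each node $\alpha$ a weight budget that decays sufficiently fast in $|\alpha|$ (so the tree-wide total is finite), and at node $\alpha$ enumerate requests only for those $\s$ whose current $f_s$-value is large enough that $2^{-K^{\mu^\alpha_s}_s(\s) - f_s(\s)}$ fits in $\alpha$'s budget. The approximation property of $f$ from Lemma~\ref{fexists} — that $f_s(\s) > i$ holds for all $s$ and all but finitely many $\s$ — is what makes this budgeting feasible: the rare small-$f$ strings can be handled by a fixed number of nodes near the top of the tree, while every other $\s$ is covered at deeper nodes where the budget cost is tiny. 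The geometric behavior of the $\Delta^0_2$ approximations to $K^{\mu^\alpha_s}_s$ and $f_s$ then bounds the cumulative cost of revisions.

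For the joining clause, I would choose $A$ and $B$ to be paths through $\mathcal{P}$ whose branching choices encode $C$: say, at the $n$-th split, $A$ follows direction $C(2n)$ and $B$ follows direction $C(2n+1)$. To ensure $C \le_T A \oplus B$, the tree construction should be arranged so that the branching choices of a path can be decoded computably from the path itself — for instance, by requiring each $\mu^\alpha$ to contain a canonical prefix-free encoding of $\alpha$ as a leading segment (which is compatible with the rest of the construction since the remainder of $\mu^\alpha$ has full flexibility for KC-enumeration purposes). Then from $A \oplus B$ one reads off both sequences of $\alpha_n$-indices and recovers $C$ via the interleaved encoding.

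The hardest step will be the weight-control argument: threading the needle between enumerating enough short descriptions that every $A \in \mathcal{P}$ is covered with a single uniform constant, and keeping the total KC-request weight finite despite ranging over a tree whose paths form an uncountable family. Integrating this with the label-stabilization requirement (so that $\mu^\alpha := \lim_s \mu^\alpha_s$ exists and makes $\mathcal{P}$ perfect) and with the decodability needed for the joining clause is where the construction will be most delicate.
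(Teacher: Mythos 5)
Your overall architecture (build a labeled tree, enumerate Kraft--Chaitin requests for descriptions converging on tree nodes, control the total weight, recover the constant from the machine) matches the paper's, but you have not addressed the central obstacle and the ``budget'' idea as stated does not close the gap.

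The problem is that you cannot choose where short descriptions of small-$f$ strings converge; only the universal machine decides that. Your plan is to cover a string $\s$ with small $f(\s)$ only at nodes near the top of the tree, where the budget is large. But the inequality $K(\s)\leq K^A(\s)+f(\s)+c$ must hold for \emph{every} $A\in\mathcal{P}$, and the shortest $A$-description of $\s$ may have enormous use, so its convergence is witnessed only at a very deep node of the tree. At that depth there may be $2^m$ surviving nodes (for $m$ the number of branchings below), each potentially exhibiting such a description; since $f(\s)$ is small, you would have to enumerate a request of roughly length $K^A(\s)+f(\s)$ on behalf of each of these nodes, and the total weight can be driven arbitrarily high. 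Skipping those requests, as your budget rule proposes, simply leaves the requirement unmet for those $A$. The appeal to ``geometric behavior of the $\Delta^0_2$ approximations'' bounding the ``cumulative cost of revisions'' is hand-waving at exactly the point where an argument is needed: the monotone convergence of $K^X_s(\s)$ in $s$ holds for a \emph{fixed} oracle, and gives no control once the tree itself is moving.

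What is missing is the paper's \emph{injury mechanism}: the construction does not passively accept the tree's shape but actively prunes it. When a short description of a small-$f$ string $\s$ (specifically, $\hat f(\s)=c_i$) converges at a height beyond the $i$-th branching level $n_i$, the construction injures $R_i$, pushes $n_i$ higher, and in the Injury Subroutine kills all but one descendant of each node at the old branching level, keeping precisely the branch on which the \emph{most new mass} has converged. Two things result. First, after the tree settles, descriptions of strings with $\hat f(\s)=c_i$ only appear below the $i$-th branching level, where there are at most $2^i$ nodes, so (using the growth rate $c_i=4^i$) the total weight on the surviving tree is summable (Lemma~4.1). Second, picking the branch of maximal new mass guarantees that each injury to $R_i$ permanently commits a fixed amount of mass to one of finitely many stems, so (by a Kraft-style accounting) each $R_i$ is injured only finitely often (Lemma~3.1) and the wasted weight on killed branches is also bounded (Lemmas~4.2--4.3). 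Your proposal has neither the pruning nor the ``keep the heaviest branch'' rule, so you cannot bound either the number of revisions nor the weight they cost, and the construction as described does not yield a Kraft--Chaitin set. (Your joining device --- encoding the address $\alpha$ as a prefix of $\mu^\alpha$ --- is a workable alternative to the paper's ``coding locations'' trick, but that part was never the bottleneck.)
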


It is clear that Theorem~\ref{lowforinfoperfect} follows from Theorem~\ref{perfectsetf} and Corollary~\ref{hirschwebcor}. Note that Theorem~\ref{perfectsetf} is saying that, for some understanding of `reasonable,' any reasonable weakening of lowness for $K$ has an uncountable class of witnesses. There are only countably many reals that are low for $K$, so being low for $K$ is in some sense a `maximally weak' lowness notion with only countably many witnesses. We prove Theorem~\ref{perfectsetf} by building a recursive tree $T\subset \twow$ with $2^{\omega}$-many infinite paths, none of which is isolated, and each of which satisfies the condition $K(\s)\leq^+ K^A(\s)+f(\s)$ for all $\s$ and the given $f$. To ensure that for any real $C$, we have a pair of paths that join above $C$, we will force all of our paths to be identical except for designated `coding locations,' and then the join of the paths with $C$ and $\bar{C}$ in their coding locations will be able to compute $C$.

Finally, we will extend the method of Theorem~\ref{perfectsetf} to build a perfect tree such that its paths satisfy $\forall \s \ K(\s)\leq^{+} K^A(\s)+f(\s)$ for all such functions. This tree will be a subtree of a recursive tree, but picking it out will be arithmetically more complicated. 

\begin{thm}\label{perfectsetallf} There is a perfect set $\mathcal{P}$ such that for any total $f: \twow \rightarrow \mathbb{N}$ with a recursive approximation $(f_s)$ such that $\forall i\forall^{\infty}\s\forall s f_s(\s)>i$, there is a $c_f$ such that for any $A\in \mathcal{P}$ and any $\s \in \twow$, $K(\s) \leq K^A(\s)+f(\s)+c_f$. Moreover, for any real $C$, there exist $A,\ B \in \mathcal{P}$ such that $C \leq_{T} A\oplus B$.\end{thm}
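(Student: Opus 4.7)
The plan is to build $\mathcal{P}$ by a stagewise construction that simultaneously enforces the conclusion of Theorem~\ref{perfectsetf} for every legal pair $(f,(f_s))$ allowed in the hypothesis. Fix a uniform enumeration $(\phi_j)_{j\in\omega}$ of the partial recursive functions $\twow\times\omega\to\omega$, and read $\phi_j(\sigma,s)$ as the stage-$s$ value of the $j$-th candidate approximation $f^j_s(\sigma)$. Call $j$ \emph{good} if $\phi_j$ is total, every limit $f^j(\sigma)=\lim_s\phi_j(\sigma,s)$ exists, and the hypothesis $\forall i\forall^{\infty}\sigma\forall s\ \phi_j(\sigma,s)>i$ holds. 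The aim is that for every $A\in\mathcal{P}$ and every good $j$ we have $K(\sigma)\leq^{+} K^A(\sigma)+f^j(\sigma)$ with a constant $c_j$ depending on $j$.

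To do this I would run a priority-style construction whose requirement $R_{j,e}$, at priority $\langle j,e\rangle$, is the task of preventing the $e$-th Turing reduction from producing, relative to any oracle $A\in\mathcal{P}$, a prefix-free machine that describes some $\sigma$ using fewer than $K(\sigma)-f^j(\sigma)-c_{j,e}$ bits. The proof of Theorem~\ref{perfectsetf} supplies, for each fixed $f$, a method for making the single-$f$ analogue of $R_{j,e}$ act only finitely often at each extendible node of the tree, in a way that respects the coding locations reserved for the perfectness and the join-above-$C$ property. I would lift this to all $j$ by activating $R_{j,e}$ at stage $\langle j,e\rangle$, using $\phi_j(\cdot,s)$ as a stand-in for $f^j$, and allocating to it a private family of non-coding branch points disjoint from those assigned to higher-priority requirements. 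For good $j$ the approximation eventually stabilizes to values exceeding any preassigned threshold outside a finite set of $\sigma$'s, so the total cost $R_{j,e}$ has to spend is finite and the weight bounds from the base construction apply.

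The main obstacle is the \emph{spurious} indices: $\phi_j$'s that mimic a good approximation for a long stretch before violating totality, convergence, or the $\forall i\forall^{\infty}\sigma$ clause. If left unchecked, such a $j$ could cut off branches early and destroy perfectness. The plan is to detect each violation as soon as it is witnessed, e.g.\ $\phi_j(\sigma,s)$ dropping below a value we relied on or failing to stabilize on a $\sigma$ we were tracking, and then freeze $R_{j,e}$, returning its unused reservations to lower-priority requirements. Since only finitely many cut-offs by $j$ occur inside any fixed subtree before such a detection, perfectness and the coding-location skeleton survive, while every good $j$'s requirements are met cofinitely; the resulting $\mathcal{P}$ is no longer $\Pi^0_1$ but is arithmetically definable, as anticipated in the introduction.

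The technical crux will be showing that the weight budgets from the base construction compose across all good $j$'s at once: one must verify that at each node only finitely many good $j'<j$ can preempt $R_{j,e}$, that the per-node budget guaranteed by $f^j$ eventually dominates the cumulative demand of spurious indices the construction has not yet diagnosed, and that the coding locations left intact still suffice to realise every $C$ as a join of two paths in $\mathcal{P}$. Once that bookkeeping is in place, the same join argument as in Theorem~\ref{perfectsetf} transfers verbatim.
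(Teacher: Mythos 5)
The core difficulty your plan does not address is that the hypothesis $\forall i\,\forall^{\infty}\sigma\,\forall s\ \phi_j(\sigma,s)>i$ is a $\Pi^0_2$ condition, and its failure is never witnessed at a finite stage. You propose to ``detect each violation as soon as it is witnessed'' and then freeze $R_{j,e}$, but a $\phi_j$ that is total, everywhere convergent, and (say) identically $0$, or more generally bounded on infinitely many $\sigma$, never produces such a witness. Under your plan that index is never frozen, so it continues to cut branches or flood the Kraft--Chaitin set forever. The claim that ``only finitely many cut-offs by $j$ occur inside any fixed subtree before such a detection'' is exactly what breaks: for a bad but undiagnosable $j$ there is no detection, so no finite bound. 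This kills both perfectness and the weight budget, and it cannot be patched by a better diagnosis routine because no effective (or even $\Sigma^0_1$ or $\Pi^0_1$) diagnosis of this failure exists.

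The paper avoids diagnosis altogether. It builds a single recursive tree in which the even-indexed branching levels $n_{2e}$ are \emph{guessing nodes}: one child guesses that $\phi_e$ is a finite-to-one approximation, the other guesses it is not. The requirements $S^e_i$ are only defined for $i\geq 2e+1$, and they only monitor and only cause injuries along paths that have taken the ``yes'' branch at $n_{2e}$. The perfect set $\mathcal{P}$ is $[T^*]$, where $T^*$ is the subtree obtained by taking the \emph{correct} guess at every guessing node; along $T^*$ a good $\phi_e$ injures only finitely often (the finite-injury argument of Theorem~\ref{perfectsetf} goes through), and a bad $\phi_e$ never touches $T^*$ at all since $T^*$ follows the ``no'' branch. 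Perfectness of $T^*$ is provided by the odd-indexed branching levels $n_{2e+1}$, which always split and are never tied to a guess. All the non-effectivity is absorbed into the choice of $T^*$ inside the recursive tree $T$, rather than into a runtime detection mechanism; the domain bounds are then re-verified per $L_e$ essentially as before.

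A secondary issue: your requirement $R_{j,e}$ is phrased as preventing ``the $e$-th Turing reduction from producing a prefix-free machine'' with short descriptions. That is not the shape of the problem. There is one fixed universal oracle machine $\U$, and the task for each index $e$ is to \emph{match}, via a Kraft--Chaitin set $L_e$, the short $\U^{\alpha}$-descriptions that converge on living nodes $\alpha$, up to the allowance $\phat_e(\sigma)$. The parameter $e$ indexes the candidate cost function, not an enumeration of reductions or of machines, and nothing is being ``prevented''; the construction reacts to what $\U$ does on the tree.
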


Theorem~\ref{perfectsetallf} will be proved in Section 6.

\section{Building the Tree}

The idea behind the proof of Theorem~\ref{perfectsetf} is to build the a tree $T$ with a finite injury priority construction to satisfy the requirements
$$R_i: T\text{ has at least } i+1 \text{-many levels where all living nodes branch}$$  
and
$$S_{i}: \ \  \text{For all } \s \in \twow \text{ with } f(\s)=i, K(\s)\leq^{+} K^{A}(\s)+f(\s) \text{ for all } A \in [T]$$ 
for all $i \in \omega$.

The set of paths through $T$, $[T]$, will be the perfect set we are looking for. 

The strategy for meeting the $R_i$ requirements is simple. When a requirement first becomes active and every time it is injured it picks a new number  bigger than any number mentioned so far in the construction to be $n_i$, extends each still living branch of the tree with a string of 0s to height $n_i$, and then extends each of these in both directions. This doubles the number of currently living branches, and if $R_i$ acts after $R_j$ for all $j < i$ and none of these are ever injured again, there will be at least $2^{i+1}$-many infinite paths through $T$

The strategy for meeting an $S_i$ requirement is also simple, but there is added complexity when we try to run them all together. In addition to building $T$, we build a Kraft-Chaitin set $L$ (for more on Kraft-Chaitin sets, see \cite{downhirsch} or \cite{nies}). The strategy for $S_i$ monitors the enumeration of the universal decoding machine $\U$ relative to the partial paths through $T$ that are still alive. If it sees that for some $\s$ with $f_s(\s)=i$ (so $\s$ is  a string that $S_i$ is concerned with) and some living partial path $\alpha\in T_s$ that $K^{\alpha}_s(\s)+f_s(\s)$ is less than the current best description of $\s$ that it has put into $L$, it waits for its turn to act and then asks to put the pair $\langle \s, K_s^{\alpha}(\s)+f_s(\s) \rangle$ into $L$ for the partial path $\alpha$ that minimizes $K_s^{\alpha}(\s)$. The larger construction must decide whether or not to allow this, based on how much mass this would put into $L$. 

The problem, of course, is that in general the $L$ created by all these strategies running together is \textit{not} a Kraft-Chaitin set, since we have no \textit{a priori} bound on the amount of mass we are using for requests, that is, on $\sum\limits_{\langle \s , l\rangle \in L}2^{-l}$. We are building infinitely many paths $A$ through $T$, and potentially each one could have a short description of some $\s$. Then we would have to put arbitrarily short descriptions of all $\s$'s into L, and this could push the measure arbitrarily high. In order for the Kraft-Chaitin Theorem to give us a machine $M_L$ with $K_{M_L}(\s)\leq\min\{l:\langle \s , l\rangle \in L\}$ for all $\s$, the measure must be no more than 1. As long as we can get any finite bound on the measure we can make all the lengths of descriptions longer by a constant and absorb the new constant into the constant of the machine's index. The work of the construction will be tohandle requests from the strategies for the $S_i$'s to add to $L$ without letting the measure grow too large.  

We have two advantages that will allow us to do this. First, although we are concerned about descriptions using $2^{\omega}$-many different sets as oracles, we are building our sets as paths on a tree, so they will have stems in common among them.  We know that the total mass that $\U$ can use along any path is bounded by 1 and, following conventions on use of the computations of $\U$ (namely, that if $\U_s^{\alpha}(\tau)\downarrow=\s$ then $\U_t^{\beta}(\tau)\downarrow=\s$ for all $\beta \succeq \alpha$ and all $t\geq s$) we can push the branchings of the tree to heights below which some amount of mass has already converged. This gives us some control over how many new descriptions we'll have to deal with and how short they can be. Second, we only have to match short descriptions of $\s$ up to a factor of $f(\s)$. We know $\liminf f =\infty$, so we can keep the measure of $\dom(M_{L})$ down by only allowing those $\s$ with large $f$ values to have descriptions that appear on nodes higher than the first few branchings of the tree. The goal will be (very roughly) to ensure, for each $\s$, if $f(\s)=i$ then all descriptions of $\s$ appearing on any path through T appear on initial segments with fewer than $i$ branching nodes. 
\\
\\
\section{The Construction}
Before we give the construction we formalize some terminology.
\\
\\
To \textit{kill} a node in our partially constructed tree is to make a commitment to never put nodes above it in our tree. After we kill a node, it is \textit{dead}; before it is killed it is \textit{living}.
\\
\\
As long as a number $n_i$ is associated to some $R_i$ strategy  it is called a \textit{branching level}, and nodes at that height, \textit{branching nodes}. When there is an injury and the value of $n_i$ changes, the old value is no longer a branching level, and the old nodes are no longer branching nodes. The coding locations will be the numbers $n_{i}+1$ for the $n_i$ that the construction settles on. We will write $n_{i,s}$ for the value of $n_i$ at stage $s$ 
\\
\\
To simplify the construction and the proofs, we will modify the function $f$ slightly. We would like our function to only take certain values, so that the proofs that the measure of $\dom(M_{L})$ is bounded will go more smoothly. With this is mind, we define the sequence $\{c_i\}_{i\in \omega}$ to be $c_0=0$ and $c_i=4^i$ for $i>0$. Now we let $\fhat_s(\s)= \text{ the least } c_i \text{ such that } f_t(\s)<c_{i+1} \text {for some } t\leq s$. 

Clearly, we still have that $(\forall i)(\forall^{\infty} \s)(\forall s) [\fhat_s(\s)>i]$, since all that has happened is that all the $\s$'s that at any stage were sent to values less than 4 (only finitely many, by the same property for $f$) are now sent to $0$, those that were ever sent to values less than 15, but never less than 4, are now sent to 4, etc. Also, $\fhat_s(\s)\leq f_s(\s)$ for all $\s$ and $s$, so $K(\s)\leq ^{+} K^{A}(\s)+\fhat(\s)$ implies $K(\s)\leq^{+} K^{A}(\s)+f(\s)$. Another nice feature is that $\fhat$ is monotone non-increasing in $s$, so we won't have to worry in the construction about these values going up. We redefine our $S_i$ requirements to account for this change:

$$S_{i}: \ \  \text{For all } \s \in \twow \text{ with } \fhat(\s)=c_i, K(\s)\leq^{+} K^{A}(\s)+\fhat(\s) \text{ for all } A \in [T]$$ 

We will say that $S_i$ \textit{has control of} $\s$ at stage $s$ if $\fhat_s(\s)=c_i$. 

An $S_i$ strategy \textit{requires attention} at a stage $s$ if there are a $\s$ that $S_i$ has control of and a living node $\alpha$ in $T_s$ and $K_{s}^{\alpha}(\s)+\fhat_{s}(\s)$ is less than the shortest description of $\s$ in $L_s$. This means that we are not guaranteeing the inequality $K(\s) \leq K^{A}(\s) + \fhat(\s)$ for any $A \in [T]$ that extends $\alpha$.

An $R_i$ strategy \textit{requires attention} at a stage $s$ if it does not have a number $n_i$ associated to it.  
\\
\\
Now we give the \textbf{Construction}:
\\
\noindent Order the Requirements $S_0, R_0, S_1, R_1, \hdots$.\\

\noindent \textbf{Stage 0}: Set $T_0=\emp$,  $L_0=\emp$
\\
\\
\textbf{Stage $s$+1}: \\
\textbf{Substage 1}: Calculate $\fhat_{s+1}(\s)$ and $K_{s+1}^{\alpha}(\s)$ for all living branches $\alpha$ in $T_s$ and the first $s+1$ $\s$'s. 
\\
\\
For all $\s$ such that $\fhat_{s+1}(\s) \neq \fhat_s(\s)$ or that first took an $\fhat$ value at this stage $S_i$ gains control of $\s$ where $\fhat_{s+1}(\s)=c_i$ and any other $S_j$ loses control of $\s$. Go to Substage 2.
\\
\\
\noindent\textbf{Substage 2}: If one of the first $s+1$ requirements requires attention, do the following for the one with lowest (closest to 0) priority:
\\
\\
\textbf{Case 1} If it is an $R_i$ requirement, then:\\
\indent 1.) Pick a new $n_i$ bigger than anything seen yet in the construction\\
\indent 2.) Let $T_{s+1}=T_s \cup \{ \alpha \conc \beta \conc j  | \alpha \text{ is a living leaf node in } T_s, \ |\alpha \conc \beta|=n_i, \ j=0 \text{ or } 1, \text{ and } \beta(k)=0 \text{ for all }k \text{ where it is defined} \}$\\
\indent 3.) $n_i$ is now associated to $R_i$ and is a branching level.\\
\indent 4.) $L_{s+1}=L_s$\\
\\
\\
\textbf{Case 2} If it is an $S_i$ requirement then since it requires attention, for some partial path $\alpha$ through $T_s$ and some $\s$ that $S_i$ has control over, $K_{s+1}^{\alpha}(\s)+\fhat_{s+1}(\s)<\min\{l | \langle \s, l \rangle \in L_s\}$. There are two subcases, depending on the use $\ua_{s+1} (\tau)$ of the computation $\U_{s+1}^{\alpha}(\tau)=\s$ giving the new shorter description for the lexicographically least $\s$ that this holds for.
\\
\\
\textbf{Subcase 1}: $\ua_{s+1}(\tau)\leq n_i$ (or $n_i$ not currently defined) where $\fhat_{s+1}(\s)=c_i$ 
\\
This is fine. $n_i$ is the level where we branch for the $(i+1)$st time, so if our new description appears before that level, we can make the adjustments to keep up with this change.\\
\indent 1.) Let $T_{s+1}=T_{s}$.\\
\indent 2.) Put a new request $\langle \s , \  K_{s+1}^{\alpha}(\s)+\fhat_{s+1}(\s)\rangle$ into $L_{s}$ to get $L_{s+1}$.\\
\\
\\
\textbf{Subcase 2}: $\ua_{s+1}(\tau)> n_i$ where $\fhat_{s+1}(\s)=c_i$ 
\\
This is a problem. Since $\fhat_{s+1}(\s)=c_i$, we want to only have descriptions of $\s$ appearing before the tree branches $i+1$ many times, but we have some living node $\alpha$ on $T$ with length greater than $n_i$ which gives a new shorter description of $\s$. To correct this: \\
\indent 1.) Injure $R_i$ and run the Injury Subroutine on it.\\ 
\indent 2.) Let $T_{s+1}=T_s$ and $L_{s+1}=L_s$.\\
\\
\textbf{Injury Subroutine} for $R_i$:\\
\indent 1.) Find the node $\alpha$ of $T_s$ at level $n_{i}$ and the string $\gamma$ such that $\alpha \conc \gamma$ is a living leaf node of $T_s$ and that maximizes $\sum\limits_{\tau: \ \U_{s+1}^{\gamma}(\tau)\downarrow, \ \U_{s+1}^{\alpha}(\tau)\uparrow} 2^{-|\tau|}$. If there is more than one such pair, choose the leftmost.\\
\indent 2.) For all living nodes $\beta$ at level $n_{i}$ keep the leaf node $\beta \conc \gamma$ alive; kill all other nodes above $\beta$. Set all $R_k$ for $k\geq i$ to requiring attention (i.e. disassociate from each $R_k$ its $n_k$). \\
\\
This ends the construction. We let $T=\bigcup\limits_{s}T_s$ and $L=\bigcup\limits_{s}L_s$. We now verify that the construction works.

\begin{lemma}\label{finiteinjury} Each $R_i$ is injured only finitely often.\end{lemma}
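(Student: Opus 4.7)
The plan is to induct on $i$. For the inductive step, assume $R_0,\ldots,R_{i-1}$ are each injured only finitely often, and fix a stage $s^{*}$ past which none of them is ever injured again. Then no $S_j$ with $j<i$ can enter Subcase 2 after $s^{*}$, since any such action would injure some $R_k$ with $k<i$, contradicting the induction hypothesis. Hence after $s^{*}$ the only mechanism that injures $R_i$ is $S_i$ itself firing Subcase 2, and the task reduces to bounding those firings.

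Next I would use the hypothesis on $\fhat$. Since $\fhat_s$ is non-increasing in $s$ and, by $(\forall i)(\forall^{\infty}\s)(\forall s)[\fhat_s(\s)>i]$, the set $\{\s:\exists s\,\fhat_s(\s)\leq c_i\}$ is finite, there is a stage $s^{**}\geq s^{*}$ past which the control set of $S_i$ stabilizes to the fixed finite set $X_i=\{\s:\fhat(\s)=c_i\}$. For each $\s\in X_i$ define $M(\s,s):=\min\{l:\langle\s,l\rangle\in L_s\}$; this is non-increasing in $s$ and bounded below, so it strictly decreases only finitely many times. Each Subcase 2 firing on $\s$ at stage $s$ exhibits a witness $(\alpha,\tau)$ with $|\tau|+c_i<M(\s,s)$; the injury leaves $n_i$ undefined, so any subsequent action of $S_i$ on $\s$ while $n_i$ remains undefined falls into Subcase 1 (via the ``or $n_i$ not currently defined'' clause) and strictly decreases $M(\s,\cdot)$. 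Combined with the finiteness of $X_i$, this bounds the Subcase 2 firings whenever each firing is matched with a later strict decrease of $M(\s,\cdot)$.

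The main obstacle is precisely this matching step: after a Subcase 2 firing on $\s$ it can happen that the Injury Subroutine kills every living node currently witnessing a short description of $\s$, so no Subcase 1 firing on $\s$ need follow before $R_i$ is re-associated with a new $n_i$, and later growth of the tree by higher-indexed $R_k$ can produce a fresh witness and trigger another Subcase 2 firing on the same $\s$. I would handle this by exploiting the subroutine's maximizing choice of $\gamma$: among extensions of the designated level-$n_i$ node, $\gamma$ is selected to maximize $\sum_{\tau:\U^{\gamma}_{s+1}(\tau)\downarrow,\,\U^{\alpha}_{s+1}(\tau)\uparrow}2^{-|\tau|}$. Together with the uniform prefix-free bound $\sum_{\tau:\U^{\beta}(\tau)\downarrow}2^{-|\tau|}\leq 1$ valid for every oracle $\beta$, this caps the total $\U$-mass along the evolving living subtree that can ever witness a fresh Subcase 2 firing on a fixed $\s$; in particular, only finitely many such firings can occur on each $\s\in X_i$, and summing over the finite set $X_i$ completes the induction.
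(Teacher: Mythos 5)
Your plan matches the paper's own proof in all its essential ingredients: the induction on priority (the paper phrases it as a least counterexample $R_j$); the reduction to the finitely many $\s$'s in the stable control set; using the ``or $n_i$ not currently defined'' clause to force a Subcase~1 request after an injury that keeps the witness alive, so that $M(\s,\cdot)$ can only drop finitely often; and, for the problematic case where the Injury Subroutine kills the witnessing node, using the maximizing choice of $\gamma$ together with the per-oracle prefix-free bound, noting that because $R_0,\dots,R_{i-1}$ are stable the captured mass is permanently locked onto one of the $2^i$ initial branches. This is exactly the paper's accounting.

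The one place you are a touch under-specified is the final ``in particular, only finitely many such firings can occur.'' Bounding $\sum_k m_k$ (the captured masses) by $2^i$ does not by itself bound the number of firings; you also need a uniform lower bound on $m_k$. That is where the two halves of your argument have to be \emph{combined} rather than used as alternatives: once $M(\s,\cdot)$ has reached its final finite value $M$, every subsequent offending description $\tau$ satisfies $|\tau|+c_i<M$, so the maximizing $\gamma$ captures at least $2^{c_i-M+1}$ new mass, giving at most $2^{i}\cdot 2^{M-c_i-1}$ further firings. The paper makes this link explicit (it introduces $m'$ as the mass guaranteed after the last ``keep-alive'' stage); your write-up gestures at the right ingredients but leaves the reader to supply the splice. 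With that sentence added, the argument is the paper's.
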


\begin{proof} Each $R_i$ has only finitely many $S_i$'s before it in the ordering of requirements, and by our condition on $f$, each of these only ever gets control of finitely many $\s$'s. Now we need to prove that each such $\s$ can only be the cause of finitely many injuries. Suppose some $R_i$ requirements are injured infinitely often, and let $R_j$ be the least in the ordering of requirements. Since only finitely many $\s$'s can injure $R_j$, at least one must do so infinitely often. Let $\s'$ be the lexicographically first.

By our assumption on $j$, each $R_k$ with $k<j$ is injured only finitely often and we can assume we are at a stage after this has happened for the last time. Let us also assume we are also at a stage $s>t$, where $\fhat_t(\s')$ has converged to its final value $c_j$ ($\fhat(\s')$ must be $c_j$, or else there would be an earlier $R_k$ that would also be injured infinitely often). The Injury Subroutine on $R_j$ will keep only $2^j$ many leaf nodes alive above the level $n_j$ ($n_j$ will change each time the requirement $R_j$ acts, there will always be $2^j$ many nodes at level $n_j$). Now any injury caused by $\s'$ happens because we find  a description, $\tau$, of $\s'$ that is shorter than the description we have in $L_s$ and that appears on some initial segment of height at least $n_j$. That means that at least $m:=2^{|\tau|}$ has converged above $n_j$, so the Injury Subroutine will pick a $\gamma$ with at least that much new mass. Since the $\gamma$ picked by the Injury Subroutine is kept alive, one of the $2^j$ branches that are left will have at least $m$ more mass than its initial segment up to level $n_{j,s}$. 

Each run of the Injury Subroutine will either keep the path with the shorter description of $\s'$ alive or kill it. If it is kept alive, at the next stage it will cause $S_j$ to still require attention, but the use will now be low enough (below $n_j$) that the new request will be added to $L$. In this case, the minimum length of a description of $\s'$ that would give a cause to injure $R_j$ drops by 1, so clearly this can only happen finitely often. After this stage, the amount guaranteed to be gained by some branch below $n_j$ is bounded below by some $m':=2^k$, where $k$ is the length of the last description of $\s'$ that was accepted minus 1. 

Each injury to $R_j$ by $\s'$ now adds at least $m'$ to one of $2^j$ many branches below $n_{j,s}$. We are assuming no $R$ requirements with lower priority will ever be injured again, so once a path is chosen by the Injury Subroutine for $R_j$ it stays a path through the living subtree for the rest of the construction. Thus, the mass on the $2^j$ initial branches of the living tree increases by at least $m'$ for every injury that $\s'$ makes to $R_j$, and since the total mass that can converge on any string is bounded by 1 (it will be the measure of the domain of the universal prefix-free machine relative to that string), there can be no more than $\frac{1}{m'}\cdot 2^j$ many subsequent injuries to $R_j$ from $\s'$. Thus, after all injuries to $R_k$ for $k<j$ have stopped, there are only finitely more to $R_j$, so each $R_i$ can be injured only finitely often.   

\end{proof} 
\begin{lemma} Each requirement is eventually satisfied\end{lemma}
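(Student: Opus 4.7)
The plan is to verify the two kinds of requirements separately, using Lemma~\ref{finiteinjury} as the main input, and to proceed by induction on the position of a requirement in the priority ordering, showing that each requirement eventually stops requiring attention.

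For $R_i$: by Lemma~\ref{finiteinjury} the requirement is injured only finitely often, and immediately after its last injury its number $n_i$ is disassociated, so $R_i$ requires attention. Inductively, every $R_k$ with $k<i$ and every $S_j$ with $j\leq i$ eventually stops requiring attention: for the $S_j$'s this is because $\fhat$ takes each value $c_j$ on only finitely many $\s$ (by the condition on $(f_s)$), and each such $\s$ triggers only finitely many Subcase~1 requests since the length written into $L$ is a non-negative integer that strictly decreases with each request. Once all higher priority requirements have settled, $R_i$ gets its turn and picks its final $n_i$, so the stabilized values $n_0<n_1<\cdots<n_i$ are branching levels in $T$ and $R_i$ is satisfied.

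For $S_i$: the core claim is that for every $\s$ with $\fhat(\s)=c_i$ in the limit and every $A\in[T]$, some request $\langle\s,l\rangle\in L$ has $l\leq K^A(\s)+c_i$. I argue by contradiction. Pass to a stage $s$ that is large enough that $\fhat_s(\s)=c_i$, $n_i$ has its final value $n_i^*$, no further injuries occur, and $K^A_s(\s)=K^A(\s)$ is witnessed by a computation $\U^A(\tau)=\s$ with final use $u^*$. Then $\alpha:=A\restriction u^*$ is a living node of $T_s$ with $K^\alpha_s(\s)\leq K^A(\s)$, and the contradictory hypothesis forces $S_i$ to require attention on $\s$. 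When $S_i$ acts, if $u^*\leq n_i^*$ then Subcase~1 puts a request $\langle\s,K^A(\s)+c_i\rangle$ into $L$, contradicting the hypothesis; if $u^*>n_i^*$ then Subcase~2 injures $R_i$, contradicting Lemma~\ref{finiteinjury}. Granting the claim, the Kraft--Chaitin theorem applied to $L$ (with the weight bound for $L$ to be established in the next lemma) yields a prefix-free machine $M_L$ with $K_{M_L}(\s)\leq l$ whenever $\langle\s,l\rangle\in L$, and universality of $\U$ then gives $K(\s)\leq^+ K^A(\s)+\fhat(\s)$ for every $A\in[T]$ and every $\s$, so each $S_i$ is satisfied.

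The main obstacle I expect is in the $S_i$ argument: not the contradiction itself, but confirming that at the ``sufficiently late stage'' the $S_i$ strategy really does take its turn on the offending $\s$ rather than being preempted indefinitely by higher priority requirements. This needs the careful bookkeeping sketched above---each $S_j$ controls only finitely many $\s$ at value $c_j$, and each such $\s$ permits only finitely many Subcase~1 actions because the length strictly decreases. A secondary wrinkle is that the measure bound for $L$ is genuinely separate content, required before the Kraft--Chaitin appeal is legitimate, and must be supplied in a companion lemma rather than here.
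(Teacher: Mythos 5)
Your proof is correct and follows essentially the same approach as the paper: the key input in both is Lemma~\ref{finiteinjury}, and both then argue that after the last injury each requirement acts only finitely often before being satisfied. Your treatment of the $S_i$ case is more explicit than the paper's (the paper just observes that $S_i$ acts finitely often because description lengths for the finitely many controlled $\s$'s decrease); your contradiction argument, passing to a late stage where $A\restriction u^*$ is a living node and showing that Subcase~1 would insert a good request while Subcase~2 would re-injure $R_i$, is a correct and somewhat more careful way to connect ``$S_i$ stops requiring attention'' to the actual satisfaction statement. Your closing remarks on the bookkeeping and on the separately-proved measure bound are the right caveats; neither is a gap in this lemma.
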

\begin{proof} By Lemma~\ref{finiteinjury} each requirement is injured only finitely often. Each $R_i$ only needs to act once as long as it is never injured again. Each $S_i$ can only act finitely many times after it has finished injuring $R$ requirements until it's satisfied, since there is a lower bound on the length of possible descriptions of the finitely-many $\s$'s it has control over. So any requirement will eventually no longer be injured and be the lowest priority argument that ever requires attention again. Then when it requires attention it will act finitely often and be satisfied. \end{proof} 

\section{Bounding the Domain}
We have shown that each requirement is eventually satisfied, but in satisfying the $S_i$ requirements we may have put too much mass into our request set $L$, so that the Kraft-Chatin theorem no longer applies. We now need to find an \textit{a priori} bound on the mass put into $L$, i.e., on $\Lambda := \sum\limits_{\langle \s, \ l\rangle \in L}2^{-l}$. As long as we can get some \textit{a priori} bound, we know there is a Kraft-Chaitin set where each description is longer by just a constant, so we satisfy the property in Corollary~\ref{hirschwebcor}.

First, we define a subtree 
$$T'=\{\alpha\in T| \alpha \text{ is not killed in any run of the Injury Subroutine}\}.$$ 
$T'$ is the living subtree of $T$. Since any $\langle\s , \ l\rangle$ goes into $L$ only when a new description of $\s$ converges on some living node of $T$, we may divide $L$ into two parts:\\
\begin{multline*}
L '=\{\langle\s , \ l\rangle \in L| \langle\s , \ l\rangle \text{ was put into } L \text{ in response to a description} \\ \text{converging on an initial segment in } T'\}
\end{multline*}
\begin{multline*}
L ''=\{\langle\s , \ l\rangle \in L| \langle\s , \ l\rangle \text{ was put into } L \text{ in response to a description} \\ \text{converging on an initial segment not in } T'\}.
\end{multline*}

We will bound the measures of the domains of these two sets separately. 

To simplify the proof we may assume that any string $\tau$ such that $\U_s^{\alpha}(\tau)$ converges for some living $\alpha\in T_s$ is a shorter description of some $\sigma$ we are monitoring (i.e. any $\tau$ for which $\U_s^{\alpha}(\tau)\downarrow$ causes us to either put a new request $\langle \U_s^{\alpha}(\tau), \ |\tau|+\fhat_s(\U_s^{\alpha}(\tau))\rangle$ into $L_s$ or causes an injury). 
Clearly this just increases the measure of $\dom(M_{L})$. We call a pair $(\alpha , \tau)$ \textit{exact} if $\U^{\alpha }(\tau)\downarrow$ and $|\alpha |=\ua (\tau)$, the use of the computation. Thus, we bound $\lam$ by instead bounding 
$$\del=\sum\limits_{(\alpha,\tau) \text{ exact}: \alpha \in T \text{ and } \alpha \text{ is alive at a stage when } \U_s^{\alpha}(\tau)\downarrow}2^{-|\tau|-\fhat(\U^{\alpha}(\tau))}\cdot 2.$$
$\ $\\
We add an extra factor of 2 to account for the fact that $\fhat$ is slowly converging to its final value, so we may put requests into $L$ for finitely many $\fhat_s$ values larger than the final one. This has the effect of at most doubling the mass in $L$. Clearly $\lam < \del$, and in particular $\lam '<\del '$, where $\lam'$ is defined with $L'$ instead of $L$ and $\del '$ is defined with $T'$ instead of $T$

Now we can find a bound for $\del '$.

\begin{lemma}\label{del'}$\del ' \leq 2$\end{lemma}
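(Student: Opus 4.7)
The plan is to decompose $\del'$ according to the final value $c_i = \fhat(\U^\alpha(\tau))$ of the $\fhat$-value of the decoded string, and to bound each slice by a separate application of Kraft's inequality along the $2^i$ branches of $T'$ at level $n_i$.

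First I would establish a use-bound: in $T'$, every exact pair $(\alpha,\tau)$ with $\fhat(\U^\alpha(\tau))=c_i$ satisfies $|\alpha|\leq n_i$, where $n_i$ now denotes the final (stabilised) value. Indeed, once $\fhat_s$ has settled to $c_i$ on $\U^\alpha(\tau)$, any living $\alpha\in T_s$ of height $>n_i$ that newly witnesses $\U_s^\alpha(\tau)\!\downarrow$ would trigger Subcase~2 and force another injury to $R_i$, contradicting the eventual stability of $n_i$ along $T'$. Next I would read off from the construction that at every level $\ell$ with $n_{i-1} < \ell \leq n_i$ the tree $T'$ has exactly $2^i$ nodes: the branchings of $R_0,\ldots,R_{i-1}$ have each doubled the living branches, while $R_i$ has not yet re-branched at or below $n_i$ in the final configuration. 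Denote these $2^i$ level-$n_i$ nodes by $B_i$.

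For each $\beta\in B_i$ the prefix-free machine $\U^\beta$ satisfies $\sum_{\tau\in\dom(\U^\beta)} 2^{-|\tau|}\leq 1$, and by the use convention any exact pair $(\alpha,\tau)$ with $\alpha\preceq\beta$ contributes a distinct $2^{-|\tau|}$ to this Kraft sum. Summing over $\beta\in B_i$---which overcounts pairs whose $\alpha$ lies below a divergence point of two different $\beta$'s, but that only helps for an upper bound---yields the slice estimate
$$\sum 2^{-|\tau|}\;\leq\;|B_i|\;=\;2^i,$$
the sum running over exact pairs with $\alpha\in T'$, $|\alpha|\leq n_i$, and $\fhat(\U^\alpha(\tau))=c_i$.

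Finally I would assemble. The factor of $2$ already built into $\del'$ absorbs the fact that $\fhat_s$ converges slowly down to its final value, so it suffices to bound $\tfrac12\del' = \sum_{i\geq 0} 2^{-c_i}\cdot(\text{slice at }i)$. Plugging in the slice bound with $c_i=4^i$ gives
$$\tfrac12\del'\;\leq\;\sum_{i\geq 0} 2^{\,i-4^i},$$
a super-exponentially convergent series bounded by a small absolute constant, yielding the claimed inequality. The main obstacle is the use-bound in the first step: cleanly linking the Injury Subroutine's killings, the monotonicity of $\fhat_s$, and the eventual finiteness of injuries to $R_i$ to rule out any limiting exact pair on $T'$ with $|\alpha|>n_i$. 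Once that is in place, the Kraft bookkeeping on the $2^i$ branches and the hyper-fast decay of $2^{-4^i}$ do the rest.
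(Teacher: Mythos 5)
Your argument is correct in outline but organizes the bookkeeping differently from the paper, so let me compare. The paper partitions the exact pairs by \emph{where} the use falls: for each $\s$ it collects in $Q_\s$ those $\tau$ whose use first exceeds $n_{|\s|-1}$ along the path $\alpha_\s$, shows $\fhat(\U^{\alpha_\s}(\tau))\geq c_{|\s|}\geq|\s|$ for these, and then exploits the disjointness $\sum_{\s'\preceq\s}m_{\s'}\leq 1$ (the $Q_{\s'}$ along a single path are disjoint subsets of $\dom(\U^{\alpha_\s})$) in a double-counting identity that gives exactly $\del'\leq 2$ using only $c_i\geq i$. You instead partition by the \emph{value} $\fhat(\U^\alpha(\tau))=c_i$, prove the equivalent contrapositive use-bound $|\alpha|\leq n_i$, and then apply Kraft once at each of the $2^i$ nodes at level $n_i$ to get a slice bound of $2^i$; this trades the telescoping for the brute fact that $c_i$ grows super-exponentially. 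Two small points: the paper sets $c_0=0$, not $4^0$, so your series is really $1+\sum_{i\geq 1}2^{i-4^i}\approx 1.13$, giving $\del'\lesssim 2.26$ rather than $\leq 2$ --- harmless, since the constant is absorbed anyway, but worth flagging if you want the lemma as stated. And you correctly identify the use-bound as the delicate step: your argument (an exact pair above $n_i$ with settled $\fhat$-value $c_i$ would, under the paper's simplifying assumption that every converging $\tau$ counts as a shorter description, retrigger Subcase 2 and re-injure $R_i$) is the same mechanism the paper invokes when it asserts $\fhat(\U^{\alpha_\s}(\tau))\geq c_{|\s|}$, so both treatments inherit the same implicit reliance on that simplification and on monotonicity of $\fhat_s$. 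Overall your route is a clean alternative; the paper's is sharper on the constant and, more importantly, does not need the rapid growth of $c_i$ at this stage (that growth is really there for the $\del''$ bound), which makes the two lemmas more modular.
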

\begin{proof}For each $\s \in \twow$, let $\alpha_{\s}$ be the node in $T'$ such that $|\alpha_{\s}|=n_{|\s|}$ and for all $i < |\s|$, $\alpha_{\s}(n_i+1)=\s(i)$. That is, $\alpha_{\s}$ is the partial path through $T'$ that follows $\s$ at the branching levels.

Now define $Q_{\s}=\{\tau \in \twow|\U^{\alpha_{\s}}(\tau)\downarrow \text{ and } \U^{\alpha_{\s'}}(\tau)\uparrow \text{ for } \s' \prec  \s\}$. Note that each exact pair $(\alpha, \tau)$ contributes exactly one $\tau$ to exactly one $Q_\s$ (for $\s$ least such that $\alpha \prec \alpha_{\s}$).

Finally, define $m_{\s}=\sum\limits_{\tau \in Q_{\s}}2^{-|\tau|}$. $m_{\s}$ is the measure of the strings $\tau$ that converge as descriptions on $T'$ along the path to $\alpha_{\s}$ and with uses $\uas(\tau)>n_{|\s|-1}$. Since we are concerned now only with $T'$ there can be no injuries to the branches we have. Thus $\fhat(\U^{\alpha_{\s}}(\tau))$ must be at least $c_{|\s|}$ for $\tau \in Q_{\s}$ or else there would be an injury to $R_{|\s|-1}$ and our $T'$ would change. By definition $c_i \geq i$ for all $i$, so we have that $\fhat(\U^{\alpha_{\s}}(\tau))\geq |\s|$ for $\tau \in Q_{\s}$.
So, for each exact pair $(\alpha,\tau)$ we put at most $2^{-|\tau|-|\s|+1}$ much mass into $\del'$, where $\s$ is the shortest string with $\alpha \prec \alpha_{\s}$. 

Thus, the total amount we put in for all $\tau$'s in $Q_{\s}$ is $\frac{2\cdot m_{\s}}{2^{|\s|}}$. Summing over all $\s$, we get the total amount of mass in $\del '$ is bounded by $\sum\limits_{\s}\frac{2 \cdot m_{\s}}{2^{|\s|}}$. 

Now, it is clear that any $m_{\s}$ must be less than 1, since it is the measure of some subset of the domain of the universal machine relative to some oracle (namely, $\alpha_{\s}$). However, since $\alpha_{\s}$ extends $\alpha_{\s'}$ for $\s\succ \s'$, it is also true that for any $\s$ $\sum\limits_{\s' \preceq  \s}m_{\s'}\leq 1$. Now, $\sum\limits_{\s}\frac{2 \cdot m_{\s}}{2^{|\s|}}$ is the limit as $n \rightarrow  \infty$ of the partial sums $\sum\limits_{|\s| \leq n}\frac{2 \cdot m_{\s}}{2^{|\s|}}$. For any $\s '$ of length less than $n$ there are $2^{n-|\s '|}$-many $\s$'s of length $n$ with $\s ' \prec \s$, so the sum
$\sum\limits_{|\s|=n}\sum\limits_{\s'\preceq\s}\frac{2\cdot m_{\s'}}{2^{|\s '|}}$ counts the term $\frac{2\cdot m_{\s'}}{2^{|\s '|}}$ $2^{n-|\s'|}$-many times for each $\s'$ of length less than $n$. To count this term just once for each $\s'$ we divide by $2^{n-|\s'|}$. 
This gives us that 
$$\sum\limits_{|\s| \leq n}\frac{2 \cdot m_{\s}}{2^{|\s|}}=\sum\limits_{|\s|=n}\sum\limits_{\s'\preceq\s}\frac{2\cdot m_{\s'}}{2^{|\s '|}}\frac{1}{2^{n-|\s'|}}=\sum\limits_{|\s|=n}\sum\limits_{\s'\preceq\s}\frac{2\cdot m_{\s'}}{2^{n}}=\sum\limits_{|\s|=n}\frac{2}{2^n}\sum\limits_{\s'\preceq\s}\cdot m_{\s'}.$$ The inner sum here is bounded by 1, and there are $2^n$ $\s$'s of length $n$, so this whole sum is bounded by 2. Since the partial sums are all bounded above by 2, $\sum\limits_{\s}\frac{2\cdot m_{\s}}{2^{|\s|}}$ must also be bounded by $2$, and so this is a bound on $\del '$.\end{proof}

All that remains is to find a bound for $\del ''=\del-\del '$. To do this we must first prove a claim about how the mass wasted in an injury relates to the mass saved on the chosen path.

\begin{lemma}\label{injurybound} For any injury to requirement $R_i$ at stage $s$ in the construction, the amount that is paid into $\del$ on the paths above $n_{i,s}$ (those kept and those killed) is no more than $\frac{1}{2}\cdot\frac{1}{2^{c_i}}$ times the mass, $m$, that converges on the path chosen by the run of Injury Subroutine for this injury.\end{lemma}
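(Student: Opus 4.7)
The plan is to bound the total $\del$-contribution from exact pairs $(\eta,\tau)$ with $|\eta|>n_{i,s}$ by partitioning them according to the branching-level range containing $|\eta|$. Let $k$ be the largest index with $n_{k,s}$ defined at stage $s$ and, for $j\in\{i,\ldots,k\}$, set
\[ M_j := \sum\{2^{-|\tau|} : (\eta,\tau)\text{ exact},\ n_{j,s}<|\eta|\le n_{j+1,s}\} \]
(with the convention $n_{k+1,s}=\infty$). The two ingredients of the proof will be (i) an upper bound $M_j\le 2^{j+1}m$ on the unweighted mass at each level, coming from the maximality of the chosen path in the Injury Subroutine, together with (ii) a lower bound $\fhat(\sigma)\ge c_{j+1}$ on the $\fhat$-value of $\sigma=\U^{\eta}(\tau)$, coming from the priority structure.

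For (i), fix a living leaf $\beta$ of $T_s$ above $n_{i,s}$. The quantity
\[ m(\beta) := \sum\{2^{-|\tau|} : \U^{\beta}(\tau)\!\downarrow,\ \U^{\beta\restrict n_{i,s}}(\tau)\!\uparrow\} \]
coincides with the total $2^{-|\tau|}$-mass of exact pairs on the path to $\beta$ above $n_{i,s}$; by the maximality of $(\alpha,\gamma)$, $m(\beta)\le m$. Summing this inequality over the $2^{k+1}$ leaves above $n_{i,s}$ and observing that an exact pair with $|\eta|$ in level range $(n_{j,s},n_{j+1,s}]$ lies below exactly $2^{k-j}$ of those leaves yields $M_j\cdot 2^{k-j}\le 2^{k+1}m$, i.e.\ $M_j\le 2^{j+1}m$.

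For (ii), suppose for contradiction that some exact pair above $n_{i,s}$ with $|\eta|\in(n_{j,s},n_{j+1,s}]$ has $\fhat(\sigma)=c_{j'}$ for some $j'\le j$, and let $t$ be the first stage at which $\fhat_t(\sigma)=c_{j'}$. By the convention that every convergence is a strictly shorter description of some monitored string, the pair $(\eta,\tau)$ witnesses that $S_{j'}$ requires attention at stage $t$; and since $n_{j',t}\le n_{j',s}\le n_{j,s}<|\eta|$, the eventual processing of $S_{j'}$ at some substage $\le s$ falls into Subcase 2 and injures $R_{j'}$. If $j'\le i$, this injury dissociates $R_i$, so when $R_i$ is re-activated the fresh $n_i$ is chosen larger than every number previously mentioned and in particular larger than $|\eta|$, contradicting $|\eta|>n_{i,s}$. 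The subcase $j'>i$ is handled analogously, by tracing the chosen-path restriction imposed on the tree above $n_{j',t}$ by the $R_{j'}$-Injury-Subroutine and showing that no position for $\eta$ in $T_s$ is consistent with its survival.

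Combining (i) and (ii),
\[ \sum_{\substack{(\eta,\tau)\text{ exact}\\ |\eta|>n_{i,s}}}2\cdot 2^{-|\tau|-\fhat(\sigma)} \le 4m\sum_{j\ge i}2^{j-c_{j+1}}. \]
Because $c_{j+1}=4^{j+1}$ grows doubly exponentially, the series on the right is dominated by its $j=i$ term and is at most $8m\cdot 2^{i-c_{i+1}}$. A direct verification of $c_i+i+4\le c_{i+1}$ (trivial for $c_0=0,c_1=4$ and, for $i\ge 1$, from $c_{i+1}=4c_i$) then gives the claimed bound $m\cdot 2^{-c_i-1}=\tfrac{1}{2}\cdot\tfrac{m}{2^{c_i}}$. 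The main obstacle is step (ii), and within it the subcase $j'>i$: since an injury to $R_{j'}$ with $j'>i$ does not dissociate $R_i$, the contradiction must be extracted from the chosen-path invariants of the higher-index Injury Subroutines rather than from the fresh-$n_i$-large rule, and separate care is needed to absorb the contribution of the triggering $\sigma_0$ (whose $\fhat_{s+1}(\sigma_0)=c_i$ formally falls outside the $\fhat\ge c_{i+1}$ regime) into the numerical slack provided by $c_{i+1}\ge c_i+i+4$.
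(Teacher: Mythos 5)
Your decomposition is essentially the paper's: partitioning descriptions by which branching-level range their use lies in (your $j$ corresponds to the paper's rank $l=j-i+1$), bounding the unweighted mass per range via maximality of the chosen path, lower-bounding the $\fhat$-value, and then a numerical estimate using $c_{i+l}\geq c_i+i+2l+2$ (or your equivalent $c_{i+1}\geq c_i+i+4$). Your step (i) is a clean formalization of the paper's ``at worst $m$ has converged on each segment of each path'' claim, and your numerical bound matches the paper's.

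The trouble is step (ii), and you identify the right places to worry but do not close them. For the sub-case $i<j'\leq j$, your sketch (``tracing the chosen-path restriction\ldots'') is pointing at the correct mechanism: an earlier injury to $R_{j'}$ either kills $\eta$ (so $\eta$ is not alive at stage $s$, contradiction) or dissociates $n_{j'}$, which when re-chosen exceeds every number seen so far, in particular $|\eta|$, contradicting $|\eta|>n_{j,s}\geq n_{j',s}$. But ``handled analogously'' is not a proof, and you also take for granted that ``the eventual processing of $S_{j'}$'' happens by stage $s$; since only one requirement acts per stage, this requires a priority argument (roughly: at stage $s$ the construction selects $S_i$, so no higher-priority $S_{j'}$ can still be requiring attention, hence $(\eta,\tau)$ must already have been processed).

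More seriously, your proposed handling of the triggering string $\sigma_0$ does not work. The pair that causes the injury has exact use $>n_{i,s}$ and $\fhat_s(\sigma_0)=c_i$, so it falls in segment $j\geq i$ yet has $\fhat<c_{j+1}$. You say this can be absorbed ``into the numerical slack provided by $c_{i+1}\geq c_i+i+4$,'' but at $i=0$ this inequality is an equality ($c_1=4=c_0+0+4$), so there is no slack there; and more to the point, the triggering pair alone contributes up to $2\cdot 2^{-|\tau_0|-c_i}\leq 2m\cdot 2^{-c_i}$, which already exceeds the target bound $m\cdot 2^{-c_i-1}$ by a factor of $4$ regardless of $i$. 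The paper's own presentation is informal on exactly this point (``or they would have caused an earlier injury''), so you have correctly located a soft spot, but the fix is not the slack in the $c_i$'s. The right resolution is bookkeeping: either exclude the triggering pair from the stage-$s$ sum (it is the new convergence that causes the injury, not pre-existing $\del$-mass) and account for it separately, or restate the lemma with a constant that is a fixed multiple larger; in either case the downstream bound on $\del''$ in Lemma~\ref{del''} only changes by a constant factor and the theorem is unaffected. As written, though, your combination of (i) and (ii) and the resulting inequality is not justified.
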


\begin{proof}
Suppose we are running the Injury Subroutine for $R_i$ at stage $s$. Then we will pick a node $\alpha$ at $n_i$ and a path $\gamma$ such that $\alpha \conc \gamma$ is a living leaf node in $T_s$ and $m=\sum\limits_{\tau: \ \U_{s}^{\alpha \conc \gamma}(\tau)\downarrow, \U_{s}^{\alpha}(\tau)\uparrow}2^{-|\tau|}$ is maximal, keep $\beta \conc \gamma$ alive for all living $\beta$ at level $n_i$, and kill all other nodes above these $\beta$. The claim is that
$$ m \cdot 2^{-c_{i}-1}\geq\sum\limits_{\eta \text{ a living leaf node}}\sum\limits_{\tau: \U_s^{\eta}(\tau)\downarrow,\ \U_s^{\eta\restrict n_i}(\tau)\uparrow}2^{-|\tau|-\fhat_s(\U_s^{\eta}(\tau))}\cdot 2.$$

The extra factor of $2$ at the end is again to account for the possibility of putting requests into $L$ for the same $\tau$ while the $\fhat$ value of its image is converging.

Before stage $s$ there was some amount of tree  built above $n_i$, say with $k$ many branchings above and including $n_i$. This gives $2^{k}$ many possibilities for $\gamma$ each divided into $k$-many segments separated by the branching nodes. To achieve an upper bound, we may assume that at worst $m$ has converged on each segment of each path. This is a gross overestimate, but for each segment of each path, $m$ \textit{is} an upper bound on the total mass possible (by the choice of $m$ by the Injury Subroutine). Call the \textit{rank} of a segment the number of branchings between that segment and level $n_i$, including the branching at $n_i$. Before the injury there are $2^{i+l}$ many living segments of rank $l$ on $T_s$ above height $n_i$: $2^l$ above each living node at height $n_i$ for $2^i$ many nodes at height $n_i$. Now, for any mass that converges on a segment of rank $l$, the $\fhat$ values of the images of the $\s$'s contributing to the mass must be at least $c_{i+l}$, or they would have caused an earlier injury. This means that if we assume $m$ much mass is converging on each segment, we can bound the right hand side of our inequality above with 
 $$\sum\limits_{l=1}^{k}\frac{m\cdot(2^{i+l})}{2^{c_{i+l}}}\cdot2.$$
 
The upper bound we are trying to get is $\frac{m}{2^{c_i}+1}$ so we want
 $$\frac{m}{2^{c_{i}+1}}\geq\sum\limits_{l=1}^{k}\frac{m\cdot(2^{i+l})}{2^{c_{i+l}}}\cdot2.$$
 $\ $\\
 Since we have the same $m$ on both sides, really it suffices for us to show that
 $$\frac{1}{2^{c_{i}+1}}\geq \sum\limits_{l=1}^{k}\frac{2^{i+l+1}}{2^{c_{i+l}}}$$ or, equivalently, that
 $$1\geq \sum\limits_{l=1}^{k}\frac{2^{c_i+i+l+2}}{2^{c_{i+l}}}.$$ 
 $\ $\\
 We know from calculus that $1 \geq \sum\limits_{l=1}^k\frac{1}{2^l}$, so we now need only show that 
$$\frac{1}{2^l}\geq\frac{1}{2^{c_{i+l}-c_{i}-i-l-2}}.$$
This reduces to showing that $\forall i \geq 0\forall l\geq1[ c_{i+l}\geq c_{i}+i+2l+2]$. Recall that $c_0=0$ and $c_i=4^i$ for all $i\geq1$ and then this also follows from elementary calculus.

Thus, the amount of mass paid into $\del$ that is affected by any injury to $R_i$ (and in particular the amount that is wasted because it was spent on paths killed by the injury) is no more than $\frac{1}{2^{c_i+1}}$ times the mass that converged on the path that was preserved.
\end{proof}

Now we can find an upper bound for $\del ''$
\begin{lemma}\label{del''} $\del '' \leq 2$ \end{lemma}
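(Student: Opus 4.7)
The plan is to decompose $\Delta''$ as a sum over individual injury events and apply Lemma~\ref{injurybound} to each. Every $\alpha \in T \setminus T'$ is killed at a unique run of the Injury Subroutine, so the contribution to $\Delta''$ from all $\alpha$ killed at the $k$-th injury (to some requirement $R_{i(k)}$) is bounded, by Lemma~\ref{injurybound}, by $m_k/2^{c_{i(k)}+1}$, where $m_k$ is the mass on that injury's preserved path. Regrouping by the injured requirement gives
\[
\Delta''\ \leq\ \sum_k \frac{m_k}{2^{c_{i(k)}+1}}\ =\ \sum_i \frac{1}{2^{c_i+1}}\sum_{k:\,i(k)=i} m_k.
\]

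The main work, and the chief obstacle, is bounding the inner sum $\sum_{k:\,i(k)=i} m_k$ for each fixed $i$. I would reuse the mass-counting argument from the proof of Lemma~\ref{finiteinjury}: once every higher-priority requirement $R_j$ ($j<i$) has stabilized, the $2^i$ initial branches of the living tree at level $n_i$ are fixed, and each subsequent injury to $R_i$ deposits its $m_k$ on exactly one of them. Since the $U$-mass relative to any single oracle is bounded by $1$, the $m_k$'s accumulated on a single initial branch sum to at most $1$, and so the total across the $2^i$ branches is at most $2^i$. The same capacity argument can be applied inside each ``era'' between two consecutive higher-priority injuries during the pre-stabilization period, and by Lemma~\ref{finiteinjury} there are only finitely many such eras, so the full sum is bounded by some finite $B_i$. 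The subtlety is that between eras the identity of the ``initial branches'' at level $n_i$ changes, so mass that was counted on a preserved path may end up on a path killed by a higher-priority injury; making the era bookkeeping precise, or alternatively charging each $m_k$ directly to a fixed path in $T'$ while absorbing mass on later-killed preserved paths into the higher-priority injury's already-counted Lemma~\ref{injurybound} bound, is where the care is needed.

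Finally, I would sum over $i$ using $c_0 = 0$ and $c_i = 4^i$ for $i\ge 1$. The $i=0$ term contributes at most $B_0/2 \leq 1/2$, since $R_0$ has no higher-priority $R_j$ and therefore $B_0 \leq 2^0 = 1$. For $i\ge 1$ the weight $2^{-c_i-1} = 2^{-4^i-1}$ drops off so quickly that even a loose bound on $B_i$ makes the tail $\sum_{i\ge 1} B_i/2^{4^i+1}$ negligibly small, comfortably yielding $\Delta'' \leq 2$.
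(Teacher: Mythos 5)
Your opening decomposition is exactly the paper's: group $\Delta''$ by the injured requirement, charge the waste of each injury to the mass $m_k$ on its preserved path via Lemma~\ref{injurybound}, and sum. The $i=0$ case is also handled correctly (there is one stem below $n_0$, the $m_k$'s converge on disjoint segments of that stem, so they sum to at most $1$). The problem is entirely in how you treat $i>0$.

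The gap is that you never actually produce an \emph{a priori} bound on $B_i = \sum_{k:\,i(k)=i} m_k$. You observe (correctly) that within a single ``era'' between two consecutive higher-priority injuries the capacity argument gives $\leq 2^i$, and then invoke Lemma~\ref{finiteinjury} to say there are finitely many eras, ``so the full sum is bounded by some finite $B_i$.'' But ``finite'' and ``a priori bounded'' are not the same thing: Lemma~\ref{finiteinjury} tells you the number of eras is finite for a given run of the construction, not that it is bounded independently of the run. The quantity being bounded must be the measure of the domain of a single machine $M_L$ whose index is fixed in advance, so the bound has to be computable from the construction parameters alone. Your final step then compounds the problem by assuming, without argument, that whatever $B_i$ is, it grows slower than $2^{4^i}$. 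That is precisely what needs to be proved, and it is not at all obvious: a naive count of (eras)$\times 2^i$ is uncontrolled. The paper closes exactly this gap with the cascading ``reservoir'' accounting: within era $j$, the mass $R_i$ can deposit on each of its $2^i$ reservoirs is bounded not by $1$ but by $m_j$ (the mass the higher-priority injury will sweep up), and tracking how often $R_i$ must refill to feed $R_{i-1}$, which must refill to feed $R_{i-2}$, and so on, gives a product of the reservoir counts $2^0\cdot 2^1\cdots 2^i = 2^{i(i+1)/2}$. Only after establishing this explicit bound does one see that $B_i/2^{c_i+1}\leq 1/2^i$ and the sum telescopes to $2$. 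You correctly flag this bookkeeping as ``where the care is needed,'' but then skip it; that care is the actual content of the lemma.
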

\begin{proof}
Any mass in $\del''$ is there because it was put into $\del$ when some description converged with some path as an oracle, and later the path was killed by a run of the Injury Subroutine. Thus, we can bound the total amount wasted by keeping track of the amount wasted for injuries to each $R_i$. 

An injury to $R_i$ at stage $s$ chooses the node $\alpha$ at level $n_{i,s}$ and a path $\gamma$ above $\alpha$ such that the amount of mass that converges on $\gamma$ is maximized. In doing so it fixes at least that much mass on one of the $2^i$ many paths of the living tree below the new $n_{i,t}$ (there are always $2^i$ living paths below $n_i$)

Consider the case $i=0$, when we are settling the stem of the tree. Since there are no requirements of lower priority that will be injured (just some $S_i$ requirements), each injury to $R_0$ settles some extension of the previous stem to be the new stem. From the Lemma~\ref{finiteinjury} there are only finitely many, say $k$, injuries to $R_0$. Let $m$ be the amount of mass that converges along the final stem, and $m_j$ be the mass that converges on the segment that is chosen by the $j$th injury to $R_0$. Then $m\geq\sum\limits_{j=1}^{k}m_j$ (there may be some amount of mass that converges on the stem that is added after $R_0$ has stopped being injured, but this is accounted for in $\del'$). From the lemma above, we know the total contribution to $\del ''$ from branches that are killed by these injuries to $R_0$ is no more than $\sum\limits_{j=1}^{k}\frac{m_j}{2^{c_0+1}}\leq\frac{m}{2}\leq\frac{1}{2}$.

For $i>0$ injuries to $R$ requirements with lower priority can interfere while $R_i$ is trying to fix mass to branches below $n_i$. For example, consider $R_1$. There will always be 2 branches below $n_1$ and each injury to $R_1$ puts some amount of mass on at least one of the two. However, a subsequent injury to $R_0$ will choose only one of these branches to keep alive and kill the other. Any mass that was paid into $\del$ for the branch that was killed can be charged to $R_0$, since it was its injury that killed it, but mass also went into $\del''$ for injuries to $R_1$ as mass was being put on the branch that would eventually be killed, and this mass must be charged to $R_1$. Looking at what happens with $R_1$ between the $j-1$th and $j$th injuries to $R_0$, we see that $R_1$ can put at most $m_j$ much mass on each of its paths, since that is how much is added to the stem at the $j$th injury to $R_0$. Thus, the amount that can go into $\del ''$ due to $R_1$ in this phase of the construction is at most $\frac{2m_j}{2^{c_1+1}}=\frac{2m_j}{2^5}$. So the most that $R_1$ can put in during the time when $R_0$ is settling is $\sum\limits_{j}^{k}\frac{m_j}{2^4}\leq \frac{m}{2^4}\leq \frac{1}{2^4}$. After $R_0$ has stopped being injured, injuries to $R_1$ will each put some amount of mass on each of the branches below $n_1$ (as it moves). This will never be lost to any other injuries, so the most that can go into $\del ''$ from $R_1$ in this phase is $\frac{2 \cdot 1}{2^5}=\frac{1}{2^4}$, since 1 is a loose upper bound on the amount that can be put on each path. Thus, the total mass that goes into $\del ''$ due to $R_1$ is less than $\frac{1}{2^3}$. 

The general case for $i > 0$ will follow this pattern. We can imagine the paths in the living tree between $n_l$ and $n_{l+1}$ as reservoirs for mass (the reservoirs are similar to the $Q_{\s}$ defined in the proof of Lemma~\ref{del'}, but depend on the stage of the construction). These paths will change with injuries as the construction goes on, but there are always $2^l$ many. An injury to $R_i$, if it wastes as much mass as possible, takes just the reservoir with the most mass above it and puts that much mass in one of its own reservoirs, then empties all other reservoirs above it (we're only guaranteed that it will take the most massive reservoir). 
As above, mass can also be added to reservoirs with injuries, but any amount that is will be counted in the next injury below it, or end up on the final living tree and so in $\del$. Keeping track of how much mass goes into $\del ''$ based on $R_i$'s injuries now reduces to counting how much mass goes into its reservoirs (the waste will be no more than $\frac{1}{2^{c_i+1}}$ times all the mass that ever shows up in $R_i$'s reservoirs). While $R_0$ is still active, the worst that can happen is that $R_i$ fills each of its reservoirs with some $m_j$, then an injury to $R_{i-1}$ empties them all and puts that $m_j$ into one of its own. Then $R_i$ can fill all it's reservoirs again with the same mass, and $R_{i-1}$ can empty them all to take $m_j$ for one of its own again. This can happen till all of $R_{i-1}$'s reservoirs have $m_j$ in them, and then $R_{i-2}$ can injure it and move that much mass to one of its own reservoir. Then $R_i$ has to fill all its own reservoirs enough times to fill all of $R_{i-1}$'s and then $R_{i-2}$ can do the same thing. This can keep going until $R_1$ has $m_j$ in both of its reservoirs, and then $R_0$ can act and take its $m_j$. Obviously the construction won't actually run in this order, but any actual run will call for less mass from $R_i$. 

This gives us that the most mass the $R_i$ can waste while $R_0$ is active is 

$$\sum\limits_{j=1}^{k}\prod\limits_{p=0}^{i}\frac{m_j 2^{p}}{2^{c_i+1}}=\sum\limits_{j=1}^{k}\frac{m_j 2^{\frac{i^2+i}{2}}}{2^{c_i+1}}\leq \frac{m 2^{\frac{i^2+i}{2}}}{2^{c_i+1}}\leq\frac{2^{\frac{i^2+i}{2}}}{2^{c_i+1}}.$$

After $R_0$ has settled, the worst that can happen is that $R_i$ now needs to provide enough mass in its reservoirs to fill both reservoirs of $R_1$ with up to 1 total mass each (again, an overestimate). This can cost at most 
$$\prod\limits_{p=1}^{i}\frac{1\cdot 2^{p}}{2^{c_i+1}}=\frac{ 2^{\frac{i^2+i}{2}}}{2^{c_i+1}}\leq \frac{ 2^{\frac{i^2+i}{2}}}{2^{c_i+1}}.$$

After this phase it may have to fill all the reservoirs of $R_2$ with up to 1 total mass each, and after that perhaps the same for $R_3$, $R_4$ and so on, till at last it can just fill all its own reservoirs without fear of injury. Thus, the total amount of mass wasted by $R_i$ while all this is happening is bounded by
$$\sum\limits_{l=0}^{i}\prod\limits_{p=l}^{i}\frac{1\cdot 2^p}{2^{c_i+1}}.$$

This is certainly less than 
$$\sum\limits_{l=0}^{i}\prod\limits_{p=0}^{i}\frac{1\cdot2^p}{2^{c_i+1}}\leq\sum\limits_{l=0}^{i}\frac{2^{\frac{i^2+i}{2}}}{2^{c_i+1}}=(i+1)\frac{2^{\frac{i^2+i}{2}}}{2^{c_i+1}}\leq 2^{i+1}\frac{2^{\frac{i^2+i}{2}}}{2^{c_i+1}}=\frac{2^{\frac{i^2+3i+2}{2}}}{2^{c_i+1}}.$$

Recalling that $c_0=0$ and $c_i=4^i$ for $i>1$, it is clear with some calculus that $\frac{2^{\frac{i^2+3i+2}{2}}}{2^{c_i+1}}\leq \frac{1}{2^i}$.

Now we can bound the total mass wasted by all requirements through the whole construction. It is the sum of the mass wasted by each $R_i$, so it is no greater than $\sum\limits_{i=0}^{\infty}\frac{1}{2^i}=2$.

\
\end{proof}

By Lemma~\ref{del'}, $\del ' \leq 2$, so we get $\del \leq 4$, and since $\lam \leq \del$, we find that we do have an \textit{a priori} bound on the measure of the domain of $M_{L}$. It is possibly too big by a factor of $4$, but then the Kraft-Chaitin set that has the length of all descriptions increased by 2 is legitimately a Kraft-Chaitin set, and so we get that $K(\s)\leq^{+}K^{A}(\s)+\fhat(\s)$ for all $\s$ in $\twow$ and all $A \in [T]$, as desired.

This completes the proof that all the infinite paths through $T$ have $K(\s)\leq^+ K^A(\s)+f(\s)$, with the same additive constant. By construction, the set of infinite paths through $T$ is also perfect, so there are uncountably many of them, and so not all are $\Delta^0_2$. 
Also by construction, the only places that two infinite paths through $T$ can differ are at the levels $n_{i}+1$ for all $i$. This means that for any real $A$ there are paths $B$ and $C$ through $T$ with $B(n_i+1)=A(i)$ and $C(n_i+1)=1-A(i)$. Then $B \oplus C$ can compute $A$ by comparing $B$ to $C$ and giving $B$'s value at points where they differ. Additionally, if $A\geq_T 0'$ then $A$ can run the construction and compute which branches will die, so it can compute where the coding locations are and what the paths between them look like. Thus, for any real $A$ there are paths $B$ and $C$ through $T$ such that $B\oplus C \geq _{T} A$, and if $A\geq_T0'$ then $A\equiv _T B\oplus C$. This completes the proof of Theorem~\ref{perfectsetf}.

\begin{proof} (of Theorem~\ref{lowforinfoperfect}) Apply Theorem~\ref{perfectsetf} to the Hirschfeldt-Weber function from Lemma~\ref{hirschwebcor}. \end{proof}

\section{Extensions to Effective Dimension}

Following Hirschfeldt and Weber, we can use different functions $f$ to obtain results similar to Theorem~\ref{lowforinfoperfect} for other lowness properties. First, some definitions.

Effective Hausdorff dimension and effective packing dimension were first defined in terms of martingales succeeding on a given set, but they have equivalent definitions in terms of Komogorov complexity.

\begin{definition} (Mayordomo \cite{hausdorffdef}) The \emph{effective Hausdorff dimension} of a real $S$ is $$\dim(S)=\liminf\limits_{n \rightarrow \infty}\frac{K(S\restrict n)}{n}$$\\
(Athreya, Hitchcock, Lutz, and Mayordomo \cite{packingdef}) The \emph{effective packing dimension} of a real $S$ is $$\Dim(S)=\limsup\limits_{n \rightarrow \infty}\frac{K(S\restrict n)}{n}$$
\end{definition}

These definitions relativize to a real $A$ by taking $K^{A}(S\restrict n)$ instead of $K(S\restrict n)$. We call the relativized versions $\dim^A$ and $\Dim^A$. We can now define a lowness notion for each of these dimensions.

\begin{definition} A real $A$ is \emph{low for effective Hausdorff dimension} if for every real $S$, $\dim(S)=\dim^A(S)$.\\
A real $A$ is \emph{low for effective packing dimension} if for every real $S$, $\Dim(S)=\Dim^A(S)$.\end{definition}

We can use our technique for building perfect sets of reals while controlling their relative Kolmogorov complexities to prove the following theorem.

\begin{thm}\label{dimensionthm} There is a perfect set of reals that are low for effective Hausdorff dimension and effective packing dimension, the class of such reals does not form an ideal, and any real above $0'$ is the join of two such reals\end{thm}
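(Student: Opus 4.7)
The plan is to invoke Theorem~\ref{perfectsetf} with a slowly growing computable $f$ and then translate the resulting Kolmogorov bound into dimension equalities. Concretely, I would take $f(\sigma) = \lfloor \log(|\sigma|+1)\rfloor$. This is total and computable, so its constant approximation $f_s = f$ trivially satisfies the required quantifier pattern: for each $i$, only the finitely many $\sigma$ of length at most $2^i$ have $f(\sigma)\le i$. Theorem~\ref{perfectsetf} then yields a perfect $\Pi^0_1$ class $\mathcal{P}$ and a constant $c$ with $K(\sigma) \le K^A(\sigma) + \lfloor \log(|\sigma|+1)\rfloor + c$ for every $A \in \mathcal{P}$ and every $\sigma \in \twow$.

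Next I would verify that every $A \in \mathcal{P}$ is low for both effective dimensions. For any real $S$, substitute $\sigma = S\restrict n$ in the bound, divide by $n$, and take the $\liminf$ (resp.\ $\limsup$) as $n \to \infty$. The error $\lfloor \log(n+1)\rfloor/n + O(1/n)$ tends to $0$, yielding $\dim(S) \le \dim^A(S)$ and $\Dim(S) \le \Dim^A(S)$. The reverse inequalities are immediate from $K^A(\sigma)\le^+ K(\sigma)$, so $\dim(S)=\dim^A(S)$ and $\Dim(S)=\Dim^A(S)$ for every $S$, i.e., $A$ is low for both dimensions.

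The ``join above $0'$'' clause follows from the moreover part of Theorem~\ref{perfectsetf} together with the closing observation of Section 4: any $C \geq_T 0'$ can run the construction itself, locate the coding levels, and recover both branches, so the $A, B \in \mathcal{P}$ coding $C$ in fact satisfy $A \oplus B \equiv_T C$.

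Finally, to show the class is not an ideal, I would apply the moreover clause with $C$ taken to be a Martin-L\"of random real (for instance $C = \Omega$, which has effective Hausdorff dimension $1$), producing $A, B \in \mathcal{P}$ with $C \leq_T A\oplus B$. If the join $A \oplus B$ were itself low for Hausdorff dimension, then evaluating the defining property at $S = C$ would give $\dim(C) = \dim^{A\oplus B}(C)$; but $C \leq_T A \oplus B$ forces $K^{A\oplus B}(C\restrict n) = O(\log n)$, hence $\dim^{A\oplus B}(C) = 0$, contradicting $\dim(C)=1$. Thus $A \oplus B$ is not low for Hausdorff dimension even though $A$ and $B$ are, so the class is not closed under Turing join and in particular not an ideal. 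The only genuine computation is the routine $\liminf/\limsup$ passage in the second step; everything else is a direct inheritance from Theorem~\ref{perfectsetf}, so I anticipate no substantive obstacle beyond choosing $f$ slowly-growing enough for the dimension quotient to vanish while still satisfying the hypothesis of that theorem.
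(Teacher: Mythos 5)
Your proposal is correct and matches the paper's approach: apply Theorem~\ref{perfectsetf} with a slowly growing computable $f$ such as $\log|\sigma|$ and observe that the error $f(S\restrict n)/n$ vanishes in the limit, so the relativized and unrelativized dimensions agree. You additionally spell out the non-ideal clause (via a Martin-L\"of random $C$ whose relative Hausdorff dimension drops to $0$ over $A\oplus B$) and the join-above-$0'$ clause, both of which the paper's printed proof of this theorem leaves implicit by pointing at the ``Moreover'' part of Theorem~\ref{perfectsetf} and the closing observations of Section~4, so your write-up is in fact more complete than the one in the paper.
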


\begin{proof}
Apply Theorem~\ref{perfectsetf} to $f(\s)=\log(|\s|)$. $f$ is certainly total on $\twow$, recursively approximable, and has $\forall i \forall^{\infty} \s \forall s f_s(\s)>i$, so we will get a perfect set of reals $A$ such that $K(\s) \leq^+ K^A(\s)+f(\s)$ for all $\s$. For any real $R$ we have $K^R(\s) \leq^+ K(\s)$, so for the $A$'s we build we get $K(\s)-f(\s) \leq^+K^A(\s)\leq^+K(\s)$. Then for any $S$ and $n$
$$\frac{K(S\restrict n)}{n}-\frac{\log{n}}{n}\leq^+\frac{K^A(S\restrict n)}{n}\leq^+\frac{K(S\restrict n)}{n}.$$
The limit of the $\frac{\log{n}}{n}$ term is $0$ as $n \rightarrow \infty$, so $\dim(S)=\dim^A(S)$ and $\Dim(S)=\Dim^A(S)$.
\end{proof}

Note that the fact that there is a perfect set of such reals was discovered independently by Lempp, Miller, Ng, Turetsky, and Weber and will appear in their forthcoming paper~\cite{perfectdimension}.

\section{Extending to Infinitely Many $f$'s}
Theorem~\ref{perfectsetf} allows us to build a perfect set of reals that all satisfy $K(\s) \leq^{+} K^{A}(\s) +f(\s)$ for $f$'s that have well-behaved recursive approximations. We now want to extend the techniques from Theorem~\ref{perfectsetf} to build such a perfect set that works for all these well-behaved $f$'s. For ease of use we give a definition. 

\begin{definition} A function $f: \twow \rightarrow \mathbb{N}$ is called \textit{finite-to-one approximable} if it has a recursive approximation $f_s$ such that $\forall i \forall^{\infty} \s \forall s \ f_s(\s)>i$. Such an approximation is called a \textit{finite-to-one approximation}.

\end{definition}

We will restate Theorem~\ref{perfectsetallf} using this definition:\\
\begin{thmref} There is a perfect set $\mathcal{P}$ such that for any total $f: \twow \rightarrow \mathbb{N}$ that is finite-to-one approximable, there is a $c_f$ such that for any $A\in \mathcal{P}$ and any $\s \in \twow$, $K(\s) \leq K^A(\s)+f(\s)+c_f$. Moreover, for any real $C$, there exist $A,\ B \in \mathcal{P}$ such that $C \leq_{T} A\oplus B$.
\end{thmref}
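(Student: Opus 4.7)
The plan is to run the priority construction from Section~3 for countably many candidate approximations simultaneously, using a geometric weighting that keeps their combined mass demands bounded. Enumerate all partial recursive functions $\phi_e$ intended as candidate finite-to-one approximations, and for each $e$ form the truncated approximation $\hat{\phi}^e_s$ taking values only in the scale $\{c_j\}=\{0,4,16,\ldots\}$, exactly as $\hat{f}_s$ was formed from $f$ in Section~3. For each pair $(e, i)$, introduce a requirement $S^e_i$ demanding that for every $\s$ with $\hat{\phi}^e(\s) = c_i$ and every $A$ in the final tree, $K(\s) \leq K^A(\s) + c_i + c_e$ for a constant $c_e$ depending only on $e$.

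First, I would share a single sequence of branching levels $n_k$ and branching requirements $R_k$ across all indices $e$, and interleave the $R_k$'s with the $S^e_i$'s so that every requirement has only finitely many of higher priority. When an $S^e_i$ strategy wants to add a description of $\s$ to the Kraft--Chaitin set $L$, it pads the requested length by an additional $h(e)$ bits, where $h:\omega\to\omega$ satisfies $\sum_e 2^{-h(e)} < \infty$ (e.g.\ $h(e) = 2e+2$). The Injury Subroutine is triggered exactly as in Section~3 whenever $S^e_i$ sees a use beyond $n_{c_i}$, injuring $R_{c_i}$ and all $R_k$ with $k \geq c_i$.

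Second, the mass accounting of Section~4 adapts index-by-index. For each $e$, let $\Delta_e$ denote the total weighted mass deposited by the $S^e_i$ strategies for all $i$. Because every such request carries an extra factor of $2^{-h(e)}$ relative to the single-$f$ setting, the bounds from Lemmas~\ref{del'} and~\ref{del''} give $\Delta_e \leq 4 \cdot 2^{-h(e)}$. Summing, $\Delta = \sum_e \Delta_e \leq 4\sum_e 2^{-h(e)} < \infty$, so after a constant adjustment we obtain a genuine Kraft--Chaitin set whose associated machine witnesses $K(\s) \leq K^A(\s) + \hat{\phi}^e(\s) + c_e$ for every $A \in [T]$ and every $\s$, for each $e$ indexing a genuine finite-to-one approximation to some $f$; since $\hat{\phi}^e(\s) \leq f(\s)$, this delivers the bound claimed in the theorem with $c_f = c_e$.

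The main obstacle is that ``bad'' indices $e$, whose $\phi_e$ is not a genuine finite-to-one approximation, can generate infinitely many attention requests and threaten the finite-injury structure on the branching requirements. To contain them, I would gate each $S^e_i$ by a monitorable test: at stage $s$, only allow $S^e_i$ to act if the observed set $\{\s : \hat{\phi}^e_s(\s) \leq c_i\}$ has cardinality at most $B(e, i, s)$, where $B(e, i, s)$ grows slowly in $s$ but $B(e, i, s) \to \infty$. For genuine $\phi_e$ the gate is eventually open forever and Lemma~\ref{finiteinjury} goes through verbatim; for bad $e$ the gate eventually closes and $S^e_i$ causes no further disruption. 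This gating is why the final perfect tree, while a subtree of a recursive tree, is only $\Delta^0_2$ and not $\Pi^0_1$, matching the description in the text preceding the theorem. The coding locations $n_k + 1$ and the join-encoding argument transfer without change from the proof of Theorem~\ref{perfectsetf}, giving the ``moreover'' clause.
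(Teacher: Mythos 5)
Your proposal runs into a genuine obstruction that the paper's proof is specifically designed to avoid: on a \emph{shared} tree with a \emph{single} sequence of branching requirements $R_k$, an injury caused by any $S^e_i$ strategy resets branching levels for every path at once. So if even one bad index $e$ (i.e.\ one $\phi_e$ that is not a finite-to-one approximation) forces infinitely many injuries, the shared tree never settles and you lose perfection for everybody. Your defense against this is the gating test ``$|\{\s : \hat{\phi}^e_s(\s)\leq c_i\}| \leq B(e,i,s)$ with $B(e,i,s)\to\infty$,'' but this test cannot do what you want. If the gate is rechecked at every stage, then $B\to\infty$ means a bad $\phi_e$ whose set of low-valued $\s$'s grows more slowly than $B$ will keep reopening the gate and causing injuries forever. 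If instead you make the gate close permanently once violated, then a \emph{genuine} finite-to-one approximation whose (finite but large) set of low-valued $\s$'s exceeds $B(e,i,s)$ at some early stage gets permanently silenced, and the requirement $S^e_i$ it was supposed to meet goes unmet. There is no recursive test that distinguishes ``finite but large'' from ``infinite'' at a finite stage, which is exactly why a stage-by-stage gate cannot cleanly separate the good indices from the bad.

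The paper's solution is structural rather than numerical: it does not try to detect bad indices, it \emph{guesses}. The tree is enlarged so that the $2e$-th branching level is a guessing node for whether $\phi_e$ is a finite-to-one approximation, and the branching requirements become $R^\alpha_i$, indexed by a finite guessing sequence $\alpha$. Crucially, $S^e_i$ is only allowed to injure the $R^\alpha_i$ whose $\alpha$ guesses $\phi_e$ is good, so injuries caused by a bad $\phi_e$ are confined to the subtrees that guessed wrong. On the subtree $T^*$ picked out by the correct guesses, every $S^e_i$ that can cause injuries comes from a genuine finite-to-one approximation, and Lemma~\ref{finiteinjury} applies verbatim. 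The paper also keeps a separate Kraft--Chaitin set $L_e$ per index rather than padding a single $L$, so each $e$ just gets its own machine constant; your padding trick $2^{-h(e)}$ is a legitimate alternative packaging of that step, but it is not the missing ingredient. You also cannot recover $\Pi^0_1$-ness either way (picking out $T^*$, or deciding gating outcomes, both require knowing which $\phi_e$ are genuine), so the complexity observation is right; but the sandboxing-by-guessing idea is the step your proposal lacks and the one the construction hinges on.
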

What we would like to do is to run a construction like the one used in the proof of Theorem~\ref{perfectsetf} for each finite-to-one approximable $f$  and to weave these all together into one tree. The problem, of course, is that we don't know which functions these are. Thus, we will have to settle for the next best thing: build a tree of runs of the construction for \textit{all} recursive approximations $\phes$ ($\phes(\s)$ is the value given by the $e$th partial recursive function on input $\s$ after $s$ steps of computation, if there is one) with the goal being that the subtree defined by all the correct guesses as to which of these are finite-to-one approximations will be the tree we want.

The construction will be very similar to the one used above, but the branching levels $n_{2e}$ will be used to signal these guesses about the behavior of $\phes$, while the branching levels $n_{2e+1}$ will be used to ensure that the subtree given by a string of guesses remains perfect. Matters will of course be further complicated by trying to achieve $K(\s)\leq ^+ K^{A}(\s)+\phes(\s)$ for all $\s$ and $A$ for all the different $\phes$'s, including those that are wildly poorly-behaved. We will need to have $S_i$ requirements for each $\phes$, and we will need to break the $R_i$ requirements up into requirements that provide a branching node for each path instead of one requirement that branches all paths at a level. We will modify each $\phes$ to $\phat_{e,s}$ analogously to how we defined $\fhat$ above, and use $(c_i)_{i\in\omega}$ as before. The requirements for the construction are as follows.\\

\begin{multline*}R^{\alpha}_i: \text{There is a path through }T  \text{ that follows } \alpha \text{ through }i \text{-many branching } \\ \text{nodes and there is a level where it branches again.}
\end{multline*}
for every $i\in\omega$ and every $\alpha\in\twow$ with $|\alpha|=i$, and\\

$$S^e_{i}: \ \  \text{For all } \s \in \twow \text{ with } \phat_e(\s)=c_i, K(\s)\leq^{+} K^{A}(\s)+\phat_e(\s) \text{ for all } A \in [T]$$

for all $i,e \in \omega$ with $i\geq 2e+1$.

Alongside $T$ we will construct a collection of Kraft-Chaitin sets $(L_e)$, where $L_e$ will be try to witness $K(\s)\leq^{+} K^{A}(\s)+\phat_e(\s)$ for all $\s$ and all $A\in [T]$. We will need to verify that the domains of the $M_{L_e}$ for $e$'s that are finite-to-one approximations all finite. We will say that $S^e_i$ has $e$-\textit{control} of $\s$ at stage $s$ if $\phat_{e,s}(\s)=c_i$. Note that while it will be possible for $\phat_{e,s}(\s)=c_i$ for some $i<2e+1$, $S^e_i$ is not among our requirements, so at stage $s$ (and at all later stages, by the definition of $\phat_{e,s}$) no $S^e$ requirement would have control of $\s$. This ensures, since we only venture a guess as to $\phat_{e,s}$'s behavior at the $2e$th branching level, that we do not allow $\phat_{e,s}$ to injure the parts of the tree below its guessing level. Note that for $\phates$'s that are finite-to-one approximations, the $S^e$ requirements will lose control of only finitely many $\s$, so the $L_e$'s will only fail to contain shorter descriptions of these finitely many.

We will say an $R^{\alpha}_i$ requirement \textit{requires attention} at stage $s$ if it does not have an associated $n^{\alpha}_{i,s}$. $R^{\alpha}_i$ will always act identically to $R^{\beta}_i$ for all $\beta$ with $\beta(2j)=\alpha(2j)$, that is, for paths which have the same guesses as to which of the $\phates$ are finite-to-one approximations. This ensures that the subtrees generated by a sequence of such guesses have certain levels that act as coding locations, as in the earlier construction. 

An $S^e_i$ requirement \textit{requires attention} at stage $s$ if there are a $\s$ that it has $e$-control over and a partial path $\alpha$ in $T_s$ that is alive and such that if $\alpha$ goes through at least $2e$-many branching nodes, then $\alpha(n^{\eta}_{2e})=1$ (where $\eta$ is the appropriate description of $\alpha$'s choices at those nodes) and  $K_{s}^{\alpha}(\s)+\phat_{e,s}(\s)$ is less than the shortest description of $\s$ in $L_{e,s}$. This means that the shorter description of $\s$ is on a path that either has not reached a guessing node for $\phes$ or is guessing that it is a finite-to-one approximation, so we will need to act.

\noindent Now the \textbf{Construction}:\\

\noindent Order the requirements $R^{\empstr}_0, S^0_1, R^{\langle 0 \rangle}_1, R^{\langle 1 \rangle}_1, S^0_2, R^{\langle 00 \rangle}_2, R^{\langle 01 \rangle}_2, R^{\langle 10 \rangle}_2, R^{\langle 11 \rangle}_2, S^0_3, S^1_3, \hdots$.\\

The reader may find it simpler to imagine the requirements being ordered not linearly, but on a binary tree. We can start with the tree $2^{\omega}$ with $R^{\alpha}_{|\alpha|}$ at the node $\alpha$, and then stretch this out by adding instances of the $S^e_i$ requirements where appropriate (i.e. on the branches that would correspond with guesses that $\phes$ is a finite-to-one approximation). The advantage of this approach is that the injury relation is much simpler: an action by a requirement injures the requirements above it in the tree. However, this calls for having multiple instances of each $S^e_i$ requirement and keeping track of which instance is causing an injury, and so in the interests of simpler bookkeeping we have opted to order the requirements linearly with a more complicated injury system. \\

\noindent \textbf{Stage 0}: Set $T_0=\emp$,  $L_{e,0}=\emp$ for every $e$
\\
\\
\textbf{Stage $s$+1}: \\
\textbf{Substage 1}: Calculate $\phat_{e,s+1}(\s)$ and $K_{s+1}^{\alpha}(\s)$ for all living branches $\alpha$ in $T_s$, the first $s+1$ $\s$'s, and $e\leq s+1$. 
\\
\\
For all $\s$ such that $\phat_{e,s+1}(\s) \neq \phat_{e,s}(\s)$ or that $\phat_{e,s+1}(\s)$ first took a value at this stage, if $i\geq 2e+1$ then $S^e_i$ gains $e$-control of $\s$ where $\phat_{e,s+1}(\s)=c_i$ and any other $S^e_j$ loses control of $\s$. Otherwise all $S^e$ requirements lose $e$-control of $\s$. Go to Substage 2:
\\
\\
\noindent\textbf{Substage 2}: Do the following for each of the first $s+1$ requirements that require attention, starting with the one with lowest (closest to 0) priority:
\\
\\
\textbf{Case 1} If it is an $R^{\alpha}_i$ requirement, then all $R^{\beta}_i$, where $\alpha(2j)=\beta(2j)$ for all $j$, will also require attention, so do the following, which will satisfy all of them.\\
\indent 1.) Pick a new $n^{\alpha}_i$ bigger than anything seen yet in the construction. Set $n^{\beta}_i=n^{\alpha}_i$ for all $\beta$ with $\beta(2j)=\alpha(2j)$\\
\indent 2.) Add $\{ \gamma \conc \delta \conc j  | \gamma \text{ is a living leaf node that follows the even bits of } \alpha \linebreak \text{ at the appropriate branching nodes }, \ |\gamma \conc \delta|=n^{\alpha}_i, \ j=0 \text{ or } 1, \text{ and } \delta(k)=0 \linebreak \text{ for all }k \text{ where it is defined} \}$ to $T_s$.\\
\indent 3.) For all $\beta$ with $\beta(2j)=\alpha(2j)$, $n^{\beta}_i$ is now associated to $R^{\beta}_i$ and is a branching level.\\
\\
\\
\textbf{Case 2} If it is an $S^e_i$ requirement, then, since it requires attention, for some partial path $\alpha$ through $T_s$ and some $\s$ that $S^e_i$ has $e$-control over, $K_{s+1}^{\alpha}(\s)+\phat_{e,s+1}(\s)<\min\{l | \langle \s, l \rangle \in L_{e,s}\}$. There are now two subcases, depending on the use $\ua_{s+1} (\tau)$ of the computation $\U_{s+1}^{\alpha}(\tau)=\s$ giving the new shorter description for the lexicographically least $\s$ that this holds for. Let $\eta$ be such that $\alpha$ follows $\eta$ at the branching nodes it goes through. Note that since this causes $S^e_i$ to require attention, it must be the case that if $\ua_{s+1}(\tau)>n^{\eta}_{2e}$ then $\alpha(n^{\eta}_{2e}+1)=1$.
\\
\\
\textbf{Subcase 1}: $\ua_{s+1}(\tau)\leq n^{\eta}_{i}$ (or $n^{\eta}_{i}$ not currently defined) where $\phat_{e,s+1}(\s)=c_i$
\\
As in the simpler case, this is fine. The new description is of a $\s$ with high enough $\phates$ value. So, do the following.\\
\indent 1.) Put a new request $\langle \s , \ K_{s+1}^{\alpha}(\s)+\phat_{e,s+1}(\s)\rangle$ into $L_{e,s}$.\\
\\
\\
\textbf{Subcase 2}: $\ua_{s+1}(\tau)>n^{\eta}_i $  
\\
Again, this case causes a problem. To correct this, do the following.\\
\indent 1.) Injure $R^{\zeta}_i$ for every $\zeta$ with $\zeta(2j)=\eta(2j)$ and run the Injury Subroutine for these requirements.\\
\indent 2.) Let $T_{s+1}=T_s$ and $L_{e,s+1}=L_{e,s}$.\\

Let $T_{s+1}$ and $L_{e, s+1}$ be the new sets acquired by performing all the additions to $T_s$ and $L_{e,s}$ at this stage.

\noindent \textbf{Injury Subroutine} for $\{R^{\zeta}_i|\zeta(2j)=\eta(2j) \}$:

\indent 1.) Find the node $\alpha$ of $T_s$ at level $n^{\eta}_{i}$ ($n^{\eta}_i=n^{\zeta}_i$ for all $\zeta(2j)=\eta(2j)$ by construction: we always work on all these nodes together) and the string $\gamma$ such that $\alpha \conc \gamma$ is a living leaf node of $T_s$ and that maximizes $\sum\limits_{\tau: \ \U_{s+1}^{\gamma}(\tau)\downarrow, \ \U_{s+1}^{\alpha}(\tau)\uparrow} 2^{-|\tau|}$. If there is more than one such pair, choose the leftmost.\\
\indent 2.) For all living nodes $\beta$ at level $n^{\eta}_{i}$ keep the leaf node $\beta \conc \gamma$ alive; kill all other nodes above $\beta$. Set all $R^{\theta}_k$ for $k\geq i$ and $\theta\succeq \zeta$ for some $\zeta(2j)=\eta(2j)$ to requiring attention (i.e. disassociate from each $R^{\theta}_k$ its $n^{\theta}_k$). \\
\\
This ends the construction. We let $T=\bigcup\limits_{s}T_s$ and $L_{e}=\bigcup\limits_{s}L_{e,s}$. What we now need to show is that the subtree given by correctly guessing at each even branching node whether or not $\phates$ is a finite-to-one approximation will be a perfect tree where every path satisfies $K(\s)\leq^{+}K^{A}(\s)+\phat_e(\s)$ for all $\s$ and a constant that depends only on $e$. First, we show that the requirements involved in this subtree are all satisfied.\\

\begin{lemma}\label{finiteinjuryrightes}If $\alpha\in\twow$ is such that $\alpha(2e)=1$ if and only if $\phates$ is a finite-to-one approximation, then $R^{\alpha}_{|\alpha|}$ is injured only finitely often\end{lemma}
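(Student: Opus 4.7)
The plan is to lift the proof of Lemma~\ref{finiteinjury} to the multi-$e$ setting, using the hypothesis on $\alpha$ to cut down the set of $S^e_i$-actions that can ever touch $R^\alpha_{|\alpha|}$. First I would unpack the Injury Subroutine: an $S^e_i$-action at stage $s$ injures $R^\theta_k$ only when $k\geq i$ and $\theta$ matches the triggering path's guessing string $\eta$ at every common even position, and the parenthetical in Case~2 forces $\eta(2e)=1$ whenever $S^e_i$ ever requires attention. Combining these, $R^\alpha_{|\alpha|}$ can be injured by $S^e_i$ only when $i\leq|\alpha|$, $2e+1\leq i$, and $\alpha(2e)=1$. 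By the lemma's hypothesis this last condition is equivalent to $\phates$ being a finite-to-one approximation, so only finitely many $e$ can ever contribute; for each such $e$ and each relevant $i$, the modified function $\phat_e$ inherits the finite-to-one property, and therefore $S^e_i$ has $e$-control of only finitely many $\s$.

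With this preparation the finite-injury induction proceeds much as in Lemma~\ref{finiteinjury}. I would induct on the priority ordering restricted to correctly-guessing $R$-requirements, assuming that every higher-priority correctly-guessing $R^\beta_j$ is injured only finitely often. Fix a stage $s_0$ past which no such higher-priority injury occurs and past which the $\phat_e$-values of every controlled $\s$ have stabilized. If $R^\alpha_{|\alpha|}$ were injured infinitely often after $s_0$, then by the previous paragraph some single $\s$ would cause infinitely many of these injuries; choose the lexicographically least such $\s$. Each injury is triggered by a new description $\tau$ of $\s$ converging past the use $n^\eta_i$ on some living leaf, so the Injury Subroutine selects a continuation $\gamma$ carrying at least $2^{-|\tau|}$ worth of freshly-converged mass and grafts it above every living node at level $n^\eta_i$, in particular above the initial segment of $\alpha$. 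After a bounded shrinkage phase in which $\gamma$ itself happens to contain the new short description of $\s$ (each such event forces the current minimum in $L_e$ for $\s$ to drop by at least one), every subsequent injury by $\s$ deposits a fixed positive amount of mass onto the stem of $\alpha$. Since the inductive hypothesis guarantees that this stem is never again wiped out and the total $\mathbb{U}$-mass relative to any oracle is bounded by $1$, only finitely many such injuries are possible, contradicting the assumption.

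The hard part will be the bookkeeping in the first paragraph: one must carefully translate the matching relation ``$\zeta(2j)=\eta(2j)$'' in the Injury Subroutine into the clean single-bit statement $\alpha(2e)=1$, and verify that the hypothesis on $\alpha$ is genuinely used here --- a path making a wrong guess at some $2e'$ is exactly what would allow $S^{e'}$ to inject unboundedly many injuries above level $2e'$, which is why the lemma must be stated only for correctly-guessing $\alpha$. Once this is in place, the mass argument is a mechanical transcription of Lemma~\ref{finiteinjury}, with the only minor wrinkle being that the shrinkage phase must be run separately for each of the finitely many relevant pairs $(e,i)$ before the definite-mass regime kicks in.
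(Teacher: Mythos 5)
Your proposal is correct and matches the paper's proof in its essential structure: isolate that only $S^e_i$-requirements with $i \leq |\alpha|$, $2e+1\leq i$, and $\alpha(2e)=1$ can ever trigger an injury to $R^\alpha_{|\alpha|}$, use the hypothesis on $\alpha$ to conclude that each such $\phates$ is a genuine finite-to-one approximation (so each $S^e_i$ has $e$-control of only finitely many $\s$), and then transplant the mass-counting finite-injury argument of Lemma~\ref{finiteinjury} onto the subtree of paths agreeing with $\alpha$ on even bits. You in fact supply more detail than the paper, which simply invokes ``the same method as Lemma~\ref{finiteinjury}'' for the final step.
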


\begin{proof} The proof is similar to the proof of Lemma~\ref{finiteinjury}. Let $\alpha$ be as in the statement of the lemma. Then the only injuries to $R^{\alpha}_{|\alpha|}$ can result from being in Subcase 2 for some $S^e_i$ requirement with $i<|\alpha|$. There are finitely many such requirements. In order for an injury to occur, a new description of some $\s$ that $S^e_i$ has $e$-control over must converge on a path $\gamma$ through $T_s$ with a use at least as big as $n^{\eta}_{2e+1}$ and such that $\gamma(n^{\eta}_{2e}+1)=1$ (so $\eta(2e)=1$) for some $\eta$ that describes $\gamma$'s branchings through $T_s$ and agrees with $\alpha$ on the even bits(or it wouldn't cause an injury to $R^{\alpha}_{|\alpha|}$. Since $\alpha$ and $\eta$ agree on the even bits of $\gamma$'s branchings and $\gamma$ goes through the branch that guesses that $\phates$ is a finite-to-one approximation, $\phates$ must actually be a finite-to-one approximation by the assumption on $\alpha$. Thus, each requirement that could injure $R^{\alpha}_{|\alpha|}$ will only ever have $e$-control over finitely many $\s$. If we view the paths that agree with $\alpha$ on the even bits (i.e. have exactly the same guesses about the $\phates$, so are injured at exactly the same stages) as a subtree, the lemma follows by the same method as Lemma~\ref{finiteinjury}.

\end{proof}

\begin{lemma}\label{rightonessatisfied} The $S^e_i$ requirements where $\phates$ is a finite-to-one approximation and the $R^{\alpha}_{|\alpha|}$ requirements where $\alpha$ is such that $\alpha(2d)=1$ if and only if $\phat_{d,s}$ is a finite-to-one approximation are all eventually satisfied. \end{lemma}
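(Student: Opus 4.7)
The plan is to mimic the proof of the analogous ``each requirement is eventually satisfied'' lemma from Section~3, now with Lemma~\ref{finiteinjuryrightes} and the structure of the refined construction supplying the finite-injury input. The argument splits naturally into the two classes of requirements listed in the statement.

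For the \emph{correct} $R^{\alpha}_{|\alpha|}$ requirements I would invoke Lemma~\ref{finiteinjuryrightes} directly: after its final injury, such an $R^{\alpha}_{|\alpha|}$ requires attention exactly once more, at which point Case~1 picks a new $n^{\alpha}_{|\alpha|}$, extends the tree in the uniform way prescribed, and the requirement is satisfied forever after.

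For a \emph{correct} $S^{e}_{i}$ (one with $\phates$ a finite-to-one approximation) the key observation is that its $e$-control set stabilizes at a finite set $F$: the finite-to-one hypothesis implies that only finitely many $\s$ ever take the value $c_i$ under $\phat_{e}$, and $\phat_{e,s}$ is monotone non-increasing in $s$. Past the stage where $F$ has settled, each Subcase~1 action by $S^{e}_{i}$ strictly decreases the minimum length recorded in $L_{e}$ for some $\s \in F$, and since these lengths are non-negative integers, only finitely many such actions occur per $\s$. The Subcase~2 actions are bounded by the same mass-conservation argument that underlies Lemma~\ref{finiteinjury} and Lemma~\ref{finiteinjuryrightes}: each Subcase~2 action for a given $\s \in F$ either keeps the offending path alive (so that at the next stage the same description triggers a Subcase~1 action, strictly shortening the $L_{e}$-length) or causes an injury whose Injury Subroutine fixes at least $2^{-|\tau|}$ of new mass on some living stem below $n^{\eta}_{i}$; since total mass on any oracle is bounded by $1$, this can happen only finitely often.

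A standard finite-injury induction on the priority ordering then finishes the argument. Fix a requirement $P$ in one of the two classes; by the previous two paragraphs all higher-priority requirements in these classes act only finitely often, and Lemma~\ref{finiteinjuryrightes} bounds the remaining injuries to $P$. Hence $P$ eventually becomes the lowest-priority requirement still requiring attention, and its remaining finitely many actions satisfy it. The delicate point in a full write-up will be checking that injuries coming from incorrect $S^{e}_{i}$ do not disrupt the bookkeeping for the correct requirements, but this is already absorbed into Lemma~\ref{finiteinjuryrightes}, which bounds injuries to the correct $R^{\alpha}_{|\alpha|}$ without any hypothesis on the behavior of the injuring $S^{e}_{i}$ beyond what the construction guarantees.
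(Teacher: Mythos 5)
Your decomposition into the two classes of requirements, your use of Lemma~\ref{finiteinjuryrightes}, and your Subcase~1/Subcase~2 accounting all match the paper's proof. However, your concluding step contains a real gap: you write that each correct requirement $P$ ``eventually becomes the lowest-priority requirement still requiring attention,'' which is the finite-injury template from the construction of Theorem~\ref{perfectsetf}. That template does not apply here, and the claim is false. In the Theorem~\ref{perfectsetallf} construction there will always be higher-priority requirements that require attention \emph{infinitely} often: an $S^e_i$ for a $\phates$ that is not a finite-to-one approximation may have $e$-control over infinitely many $\s$, and the $R^{\beta}$ requirements along incorrect guesses may be injured infinitely often. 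So a correct $P$ need never be the lowest-priority requirement requiring attention, and under the Section~3 scheduling it would be permanently blocked.

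The paper resolves this by pointing out a change that you did not invoke: in Substage~2 of the Theorem~\ref{perfectsetallf} construction, \emph{every} requirement that requires attention is allowed to act at that stage (in priority order), rather than only the lowest-priority one. The infinitely-often-acting requirements therefore do not block $P$, and the paper observes that their actions affect only the parts of $T_s$ associated with incorrect guesses, so they cannot interfere with the bookkeeping for $P$. Your remark that the delicate point is ``absorbed into Lemma~\ref{finiteinjuryrightes}'' is not sufficient: that lemma bounds injuries \emph{to} the correct $R^{\alpha}$'s, but it does not guarantee that a correct requirement ever reaches the front of the queue, which is the point your induction silently assumes. You need to replace the ``becomes lowest-priority'' step with the observation about the modified scheduling (all attention-requiring requirements act each stage) together with the non-interference of the incorrect branches.
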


\begin{proof}
Note that every requirement that requires attention at the beginning of a stage is allowed to act during that stage. Some requirements will require attention infinitely often (either because they will have $e$-control of infinitely may $\s$'s or because they will be injured infinitely often), but these requirements can only act on parts of $T_s$ that are associated with incorrect guesses about the $\phates$'s. Thus, their actions will not interfere with the actions of the requirements that satisfy the hypotheses of the lemma.
 Since $e$ is such that $\phates$ is a finite-to-one approximation, there are only finitely many $\s$ that $S^e_i$ will ever have $e$-control over. By Lemma~\ref{finiteinjuryrightes}  any $\alpha$ with $\alpha(2d)=1$ if and only if $d$ is a finite-to-one approximation must only be injured finitely often, and this includes $|\alpha|>2e$. Since these $\alpha$'s can be injured only finitely often, $S^e_i$ can cause only finitely many injuries (any injury caused by $S^e_i$ is an injury to some $R^{\beta}_i$). Thus, eventually $S^e_i$ will only be in Subcase 1 when it requires attention. Each time it acts after this point, the length of description of one of the finitely many $\s$ that will cause it to require attention again drops by at least 1. This can only happen finitely often, after which point $S^e_i$ is satisfied.

$R^{\alpha}_{|\alpha|}$ can only be injured finitely often, so eventually it reaches a stage after which it is never injured again. Once it acts after that stage it is satisfied.
\end{proof}

Let $T^{*}$ be the subtree of $T$ defined by

\begin{multline*}T^{*}=\{ \tau \in T | \forall e \text{ if } \alpha \text{ is the } 2e \text{th branching node with } \alpha \prec \tau \text{, then } \alpha \conc 1 \prec \tau \text{ if} \\ \text{ and only if } \phates \text{ is a finite-to-one approximation} \}.
\end{multline*}

That is, $T^*$ is the subtree given by guessing correctly at every even branching node whether or not $\phates$ is a finite-to-one approximation. Both extensions through any odd branching node are in $T^*$, and so with Lemmas~\ref{finiteinjuryrightes} and \ref{rightonessatisfied} we get that $T^*$ is a perfect tree. The collection of paths through  $T^*$, $[T^*]$, is the class we are looking for. All that remains is to show that for any finite-to-one approximation $\phates$, the domain of $M_{L_e}$ is finite. This will actually be true for any $e$, so every $L_e$ will be a legitimate Kraft-Chatin set, but when $\phates$ is not a finite-to-one approximation $L_e$ might just be witnessing that some degenerate path through $T$ is low for $K$, or might fail to have descriptions for infinitely many $\s$. We can define $\lam_e$, $\del_e$, $\del'_e$, and $\del''_e$ analogously to the definitions in Section 4, and we now bound the domain of $M_{L_e}$ as before.

\begin{lemma} For any $e$,  $\del'_e \leq 2$ \end{lemma}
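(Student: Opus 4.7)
The plan is to mirror Lemma~\ref{del'}, offsetting the branching count by $2e+1$ so that the ``first $|\s|$ branching levels'' become the levels actually relevant to $S^e$ (those of index at least $2e+1$). Since $L_e$ receives requests only from partial paths $\alpha$ with $\alpha(n^\eta_{2e}+1)=1$, the branch guessing $\phates$ to be finite-to-one approximable, it suffices to control the contribution to $\del'_e$ coming from those nodes; the opposite side of the $2e$-th branching generates no requests to $L_e$ and will be absorbed trivially.

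For each $\s\in\twow$ I would define $\alpha^e_\s$ as the partial path through $T'$ that takes a canonical choice (say all zeros) at branching levels $0,\ldots,2e-1$, takes the $1$ branch at the $2e$-th branching level, then follows $\s$ at the next $|\s|$ branching levels, with $|\alpha^e_\s|$ equal to the $(2e+1+|\s|)$-th branching level along this spine. Define $Q^e_\s=\{\tau : \U^{\alpha^e_\s}(\tau)\downarrow \text{ and } \U^{\alpha^e_{\s'}}(\tau)\uparrow \text{ for all } \s'\prec\s\}$ and $m^e_\s=\sum_{\tau\in Q^e_\s}2^{-|\tau|}$. As in Lemma~\ref{del'}, each relevant exact pair $(\alpha,\tau)$ with $\alpha\in T'$ contributes $\tau$ to a unique $Q^e_\s$, namely the one for which $\s$ is the shortest string with $\alpha\prec\alpha^e_\s$.

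The key new estimate comes from the no-injury property of $T'$: for $\tau\in Q^e_\s$ the use of the computation $\U^{\alpha^e_\s}(\tau)$ exceeds the $(2e+|\s|)$-th branching level, so if $\phat_e(\U^{\alpha^e_\s}(\tau))$ were at most $c_{2e+|\s|}$, then Subcase~2 of Case~2 would have fired for some $S^e_j$ with $j\leq 2e+|\s|$ and killed $\alpha^e_\s$, contradicting $\alpha^e_\s\in T'$. Hence $\phat_e(\U^{\alpha^e_\s}(\tau))\geq c_{2e+|\s|+1}\geq|\s|$, so each exact pair contributes at most $2^{-|\tau|-|\s|+1}$ to $\del'_e$, giving $\del'_e\leq\sum_\s 2m^e_\s/2^{|\s|}$. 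The same counting manipulation from Lemma~\ref{del'}, using $\sum_{\s'\preceq\s}m^e_{\s'}\leq 1$ (prefix-freeness of the $\tau$'s as inputs to $\U^{\alpha^e_\s}$), then concludes $\del'_e\leq 2$.

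The principal obstacle is bookkeeping: because each $n^\eta_i$ depends on the guess sequence $\eta$, I need to fix the canonical spine at the outset and verify that the branching-level indexing along $\alpha^e_\s$ is consistent with the injury trigger used to produce the $\phat_e$ lower bound. I must also confirm that exact pairs sitting on the opposite side of the $2e$-th branching are either definitionally excluded from $\del'_e$, by an analog of the simplification convention invoked for $\del$ in Section~4, or handled separately by the trivial bound that their oracles share a common stem below the $2e$-th branching and contribute bounded mass regardless of $\phat_e$.
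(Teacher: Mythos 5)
Your canonical‑spine idea does not capture all of the mass that can flow into $L_e$, and this is a genuine gap. You fix one choice (all zeros) at branching levels $0,\ldots,2e-1$ and then only index $Q^e_\s$ by what happens above that single path. But requests to $L_e$ can be generated by computations whose oracle $\alpha$ makes \emph{any} of the $2^{2e}$ possible choices at the first $2e$ branching levels, as long as $\alpha$ either has not yet reached the $2e$-th branching or guesses $1$ there. These paths do not share a stem with your spine beyond the first divergence point, and neither they nor their extensions above the $2e$-th branching are assigned to any $Q^e_\s$ in your scheme, so the corresponding contributions to $\del'_e$ are simply unaccounted for. Your last paragraph only notices the $0$-branch at the $2e$-th branching along the spine itself; the other $2^{2e}-1$ paths at that depth (and the $2^e$ distinct values $n^\eta_{2e}$ they may have, since the branching levels are $\eta$-dependent) are not addressed, and their extensions on the $1$-branch produce exactly the kind of mass the lemma is about.

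The paper's route is simpler and avoids this: it keeps the \emph{same} indexing $\alpha_\s$ over all $\s\in\twow$ as in Lemma~\ref{del'}, so the paths $\alpha_\s$ still sweep out all of $T'$. The two modifications needed are (i) some $\alpha_\s$ may fail to exist in the living subtree because of infinite injury along incorrect guesses; set $m_\s=0$ for those; and (ii) for $\s$ with $\s(2e)=0$ and $|\s|>2e$, no exact pair along $\alpha_\s$ beyond the $2e$-th branching ever contributes to $L_e$, while any pair that does contribute has $\phat_e$ value at least $c_{2e+1}$ (since $S^e_i$ exists only for $i\geq 2e+1$), which already gives $\phat_e\geq c_{|\s|}$ for $|\s|\leq 2e+1$. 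Combined with the usual no-injury argument on the $1$-branch side for $|\s|>2e+1$, the original counting goes through verbatim. If you wanted to rescue your approach you would have to sum over all $2^{2e}$ spines and absorb the extra factor into the slack $c_{2e+1}\gg 2e$, but at that point you have essentially reconstructed the paper's all-$\s$ indexing with extra bookkeeping.
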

\begin{proof}
The proof is exactly like the proof of Lemma~\ref{del'} with the exception that some $\alpha_\s$'s will not be defined on the living subtree of $T$, since infinite injuries to branches off of $T^*$ can cause some paths to be isolated. For these $\s$, let $m_\s=0$ and the calculations go through as before (This will in general be an over-estimate, since we will not actually put requests into $L_e$ for $\s$ with $\phat_e(\s)\leq c_{2e}$). 

\end{proof}

\begin{lemma} For any injury to requirement $R^{\alpha}_i$ at stage $s$ in the construction, the amount that is paid into $\del_e$ on the paths above the branching nodes that are affected (those that are above paths that follow $\eta$ through lower branching nodes for some $\eta$ with $\eta(2j)=\alpha(2j)$) is no more than $\frac{1}{2}\cdot\frac{1}{2^{c_i}}$ times the mass $m$ that converges on the path chosen by the run of Injury Subroutine for this injury.

\end{lemma}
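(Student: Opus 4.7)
The proof adapts Lemma~\ref{injurybound} to the multi-parameter setting. Fix the injury to $R^{\alpha}_i$ at stage $s$; by construction it is triggered by some $S^e_i$ in Subcase 2 for a string $\s$ with $\phates(\s)=c_i$, and the Injury Subroutine picks a node $\beta$ at level $n^{\eta}_i$ (where $\eta(2j)=\alpha(2j)$) together with an extension $\gamma$ such that $\beta\gamma$ is a living leaf of $T_s$ maximizing the newly converged mass $m$ along $\beta\gamma$. The affected paths are those lying above $n^{\eta}_i$ in the guess class of $\eta$, i.e.\ those agreeing with $\eta$ on all even positions below $i$.

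By the maximality of $\gamma$, every segment of $T_s$ above $n^{\eta}_i$ in this class carries at most $m$ units of converged mass. The number of segments at rank $l$ within the class is bounded above by $2^{i+l}$ (the actual count is $2^{\lfloor i/2\rfloor+l}$, since even-position branchings below $n^{\eta}_i$ split guess classes rather than doubling within-class, but the weaker bound suffices). Moreover, any description of image $\s'$ contributing to $\del_e$ on a rank-$l$ segment must satisfy $\phat_e(\s')\geq c_{i+l}$: otherwise $\phat_e(\s')=c_{i'}$ for some $i'<i+l$, the description's use exceeds $n^{\cdot}_{i'}$ for the relevant guess string, so $S^e_{i'}$ would already have been in Subcase 2 at an earlier stage and, by the priority ordering together with the finite-injury lemmas above, would have injured the appropriate $R^{\cdot}_{i'}$, restructuring the tree incompatibly with the present segment.

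Summing over ranks and including the customary extra factor of $2$ to absorb the slow convergence of $\phates(\s')$, the total mass paid into $\del_e$ on the affected paths is bounded by
\[
\sum_{l=1}^{k}\frac{m\cdot 2^{i+l+1}}{2^{c_{i+l}}},
\]
where $k$ is the number of branchings in the affected class above and including $n^{\eta}_i$. As in Lemma~\ref{injurybound}, this reduces to $1\geq\sum_{l=1}^{k}2^{c_i+i+l+2-c_{i+l}}$, which follows from $\sum_{l\geq 1}2^{-l}\leq 1$ combined with $c_{i+l}\geq c_i+i+2l+2$ for all $i\geq 0$, $l\geq 1$ (immediate from $c_0=0$, $c_i=4^i$, and elementary calculus).

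The main obstacle is justifying the $\phat_e(\s')\geq c_{i+l}$ claim: the richer priority structure --- with parameterized $R^{\alpha}_i$, the restriction $i\geq 2e+1$ for $S^e_i$ to exist, and possibly infinite injuries on wrongly-guessing branches of $T$ --- makes the earlier-injury analysis more delicate than in Section 4. In particular, for the case $i<i'<i+l$ where $S^e_{i'}$ has lower priority than $R^{\alpha}_i$, one must argue using finiteness of injuries to correctly-guessing higher-priority requirements that $S^e_{i'}$ still gets to act before stage $s$, and verify that the resulting restructuring affects precisely the guess class containing our rank-$l$ segment.
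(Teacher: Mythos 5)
Your argument is essentially the paper's: you replay the proof of Lemma~\ref{injurybound} adapted to $\del_e$, and your observation that within a guess class fewer segments can receive mass (since only paths that guess $\phates$ is a finite-to-one approximation ever generate requests in $L_e$) is precisely the paper's one added remark. The ``obstacle'' you flag, however, dissolves on closer inspection and does not require the delicate earlier-injury analysis you sketch. First, in the extended construction \emph{every} requirement that requires attention acts at each stage, not merely the lowest-priority one as in Section~3; so $S^e_{i'}$ with $i<i'<i+l$ never waits behind $R^{\alpha}_i$. Once a description of some $\s'$ with $\phat_{e,s}(\s')=c_{i'}$ converges with use above $n^{\eta}_{i'}$, the relevant $S^e_{i'}$ falls into Subcase~2 and injures $R^{\zeta}_{i'}$ for every $\zeta$ agreeing with $\eta$ on the even bits at that very stage, and this injury restructures exactly the guess class containing your rank-$l$ segment (that is the defining clause of the Injury Subroutine). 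Second, the case $i'<2e+1$ is harmless: by construction no $S^e$ requirement ever has $e$-control of such a $\s'$, so no request for it enters $L_e$ and it contributes nothing to $\del_e$. Finally, the possibility of infinitely many injuries on wrongly-guessing branches is irrelevant to this lemma, which bounds the cost of a \emph{single} injury; that possibility matters only for the bound on $\del''_e$, where the paper explicitly notes the reservoir accounting never used finiteness of the injury sequence.
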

\begin{proof}
Again, the proof is the same as the proof of Lemma~\ref{injurybound}. In fact, there will be fewer nodes above which waste occurs, because only those that are along paths that guess that the $\phat_j$ that caused the injury is a finite-to-one approximation will generate waste.
\end{proof}

\begin{lemma}For any $e$, $\del''_e \leq 2$
\end{lemma}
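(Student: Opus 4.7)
The plan is to mirror the proof of Lemma~\ref{del''}, with the preceding lemma replacing Lemma~\ref{injurybound} as the per-injury bound. As in Section 4, every contribution to $\del''_e$ arises from an exact pair $(\alpha,\tau)$ whose node $\alpha$ was later killed by some run of the Injury Subroutine, so I will bound $\del''_e$ by summing, over every Injury Subroutine call, the mass it wastes into $\del_e$.

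First I group the $R$-requirements into families. For each $\beta$ and each $i \geq |\beta|$, the Injury Subroutine is always run simultaneously on the family $\mathcal{F}_{\beta,i} = \{R^{\zeta}_i : \zeta(2j) = \beta(2j) \text{ for all } j\}$, and by construction these share a single value $n^{\beta}_i$ and act on parts of $T$ disjoint from other families at the same level. Within each family the analysis of Lemma~\ref{del''} applies essentially verbatim: I view the segments of $T$ between consecutive branching heights above the family as rank-$\ell$ reservoirs and observe, as in Section 4, that any description converging on a rank-$\ell$ reservoir above $\mathcal{F}_{\beta,i}$ must have $\phat_e$-image $\geq c_{i+\ell}$ or it would have provoked an earlier, lower-level injury. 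The reservoir-refilling bookkeeping then caps the family's total contribution to $\del''_e$ by the same quantity $2^{(i^2+3i+2)/2}/2^{c_i+1} \leq 1/2^i$ obtained there.

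Second I sum across all families and levels. The novelty relative to Section 4 is that level $i$ now carries $2^{\lceil i/2 \rceil}$ independent families, indexed by the even-bit guess prefixes. The choice $c_i = 4^i$ was made precisely with this in mind: the extra factor $2^{\lceil i/2 \rceil}$ is dominated by the slack in $1/2^{c_i+1}$, so the per-level waste bound remains geometrically summable, and a term-by-term comparison with the estimate in Lemma~\ref{del''} gives total waste at most $2$.

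The main obstacle, and the step that requires the most care, is verifying that the reservoir bookkeeping truly carries over. Specifically, I must confirm that (i) waste charged to one family $\mathcal{F}_{\beta,i}$ is never also counted against a different family's tally, so no double-counting occurs, and (ii) the bound $\phat_e(\s) \geq c_{i+\ell}$ on images along rank-$\ell$ reservoirs continues to hold under the new injury rules, which are now sensitive to the even-bit structure of the guessing paths. Both facts follow from the construction's convention that the Injury Subroutine acts on whole families and promotes the surviving extension $\gamma$ uniformly across the family, together with the monotone non-increase of $\phat_{e,s}$ in $s$.
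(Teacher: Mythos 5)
Your proposal reaches the right bound, but it organizes the argument around a different concern from the one the paper singles out. The paper's own proof is a three-sentence remark: the method of Lemma~\ref{del''} carries over without alteration, and the only new wrinkle it flags is that for $e$'s where $\phates$ is not a finite-to-one approximation, some families of $R$-requirements are injured infinitely often, so the finite sums over injuries $\sum_{j=1}^{k}m_j$ in the original reservoir bookkeeping become infinite series; these still converge because the total mass passing through any reservoir is bounded by the measure of the domain of $\U$ along the eventually surviving path. Your proof instead emphasizes that level $i$ now holds $2^{\lceil i/2\rceil}$ disjoint families and absorbs that extra factor in the slack of $c_i=4^i$. The arithmetic checks out (it is tight at $i=1$), and it is a legitimate conservative route, but it is arguably an overcount: the families partition the $2^i$ reservoirs at level $i$, so the mass the original argument tallies at level $i$ is redistributed among families rather than multiplied, and one can make the case, as the paper implicitly does, that no new per-level factor appears at all.

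The gap is that you do not address the issue the paper considers central. Items (i) and (ii) in your list of obstacles presume the reservoir bookkeeping of Lemma~\ref{del''} applies verbatim within each family, but that bookkeeping silently used the finite-injury conclusion of Lemma~\ref{finiteinjury}, which fails for families on the wrong side of a guessing branch; and the statement to be proved is for \emph{any} $e$, including $e$'s for which $\phates$ is not a finite-to-one approximation, so these bad families genuinely contribute to $L_e$ and to $\del''_e$. You should add the observation that even when a reservoir is refilled by infinitely many injuries, the masses $m_j$ fixed by those injuries accumulate along the same eventually surviving initial segment, so $\sum_j m_j$ remains bounded by the measure of the domain of $\U$ relative to that segment, and the estimate survives the passage from finite to infinite sums.
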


\begin{proof}
Here, again, the method from the proof of the earlier Lemma~\ref{del''} goes through without alteration. Nothing in the argument relied on there being only finitely many injuries that filled a given reservoir. Even if a reservoir gets mass added by infinitely many injuries, the amount of mass that can go through it is bounded as before. 
\end{proof}

This ends the proof of Theorem~\ref{perfectsetallf}. 

\bibliographystyle{acm}
\bibliography{lowforinfo}{}

\end{document}